\newtheorem{theorem}{Theorem}[section]
\newtheorem{lemma}[theorem]{Lemma}
\numberwithin{equation}{section}
\newcommand{\lbl}[1]{\label{#1}}
\newcommand{\be}{\begin{equation}}
\newcommand{\ee}{\end{equation}}
\newcommand\bes{\begin{eqnarray}} \newcommand\ees{\end{eqnarray}}
\newcommand{\bess}{\begin{eqnarray*}}
\newcommand{\eess}{\end{eqnarray*}}
\newcommand{\bbbb}{\left\{\begin{aligned}}
\newcommand{\nnnn}{\end{aligned}\right.}
\newcommand{\bea}{\begin{align*}}
\newcommand{\eea}{\end{align*}}
\newcommand\ep{\varepsilon}
\newcommand\kk{\left}
\newcommand\rr{\right}
\newcommand\dd{\displaystyle}
\newcommand\dx{{\rm d}x}
\newcommand\dy{{\rm d}y}
\newcommand\yy{\infty}
\newcommand\qq{\eqref}
\newcommand\ccc{\color{blue}}
\begin{document}\thispagestyle{empty}
\setlength{\baselineskip}{16pt}

\begin{center}
 {\LARGE\bf Spreading dynamics of a Fisher-KPP nonlocal diffusion
model with a free boundary\footnote{This work was supported by NSFC Grants
12171120, 12301247}}\\[4mm]
 {\Large Lei Li}\\[0.5mm]
{College of Science, Henan University of Technology, Zhengzhou, 450001, China}\\[2.5mm]
{\Large Mingxin Wang\footnote{Corresponding author. {\sl E-mail}: mxwang@hpu.edu.cn}}\\[0.5mm]
 {School of Mathematics and Information Science, Henan Polytechnic University, Jiaozuo, 454000, China}
\end{center}

\date{\today}

\begin{abstract} This paper concerns the spreading speed and asymptotical behaviors, which was left as an open problem in \cite{LLW22}, of a Fisher-KPP nonlocal diffusion model with a free boundary. Using a new lower solution, we get the exact finite spreading speed of free boundary that is proved to be the asymptotical spreading speed of solution component $u$. Moreover, for the algebraic decay kernels, we derive the rates of accelerated spreading and the corresponding asymptotical behaviors of solution component $u$. Especially, for the level set $E^{\lambda}(t)$ with $\lambda\in(0,u^*)$, we find an interesting propagation phenomenon different from the double free boundary problem.

\textbf{Keywords}: Nonlocal diffusion; Free boundary; Spreading speed; Accelerated spreading.

\textbf{AMS Subject Classification (2020)}: 35K57, 35R09,
35R20, 35R35, 92D25
\end{abstract}

\section{Introduction}
The free boundary problems with local diffusion or random diffusion have been increasingly used to model propagation phenomenon in ecology since the pioneering work \cite{DL}. Lots of developments along different directions have been made over the past decade. Please see the expository article \cite{Du22} and the references therein for details.  Inspired by this, Cao et al \cite{CDLL} replaced the Laplacian operator in the model of \cite{DL} with an integral operator, and proposed the following free boundary problem with nonlocal diffusion
\begin{eqnarray}\left\{\begin{aligned}\label{1.0}
&u_t=d\int_{-\yy}^{\yy}\!\!J(x-y)\big[u(t,y)-u(t,x)\big]\dy+f(u), & &t>0,~g(t)<x<h(t),\\
&u(t,x)=0,& &t>0, ~ x\notin(g(t),h(t)),\\
&h'(t)=\mu\int_{g(t)}^{h(t)}\!\!\int_{h(t)}^{\infty}
J(x-y)u(t,x){\rm d}y{\rm d}x,& &t>0,\\
&g'(t)=-\mu\int_{g(t)}^{h(t)}\!\!\int_{-\infty}^{g(t)}
J(x-y)u(t,x){\rm d}y{\rm d}x,& &t>0,\\
&h(0)=-g(0)=h_0>0,\;\; u(0,x)=\tilde u_0(x),& &-h_0\le x\le h_0,
 \end{aligned}\right.
\end{eqnarray}
where $\tilde{u}_0\in C([-h_0,h_0])$, $\tilde{u}_0(\pm h_0)=0<\tilde{u}_0(x)$ in $(-h_0,h_0)$, and the kernel $J$ satisfies
 \begin{enumerate}
\item[({\bf J})]$J\in C(\mathbb{R})\cap L^{\infty}(\mathbb{R})$, $J\ge 0$, $J(0)>0,~\displaystyle\int_{\mathbb{R}}J(x){\rm d}x=1$, \ $J$\; is even.
 \end{enumerate}
The growth term $f$ is assumed to be of the Fisher-KPP type, i.e.,
  \begin{enumerate}
  \item[({\bf F})]$f\in C^1(\mathbb{R})$, $f(0)=0<f'(0)$, $f(u^*)=0>f'(u^*)$ for some $u^*>0$, and $\frac{f(u)}{u}$ is strictly decreasing in $u>0$.
 \end{enumerate}
 Meanwhile, Cor{\'t}azar et al. \cite{CQW} independently proposed problem \eqref{1.0} with $f(u)\equiv0$.  For some detailed explanations for this diffusion operator and the free boundary condition in \eqref{1.0}, one can refer to \cite{CDLL,CQW,AMRT}.  It was proved in \cite{CDLL} that the longtime behaviors of solution to \eqref{1.0} is governed by a spreading-vanishing dichotomy, which also holds for the free boundary problems with local diffusion. A striking difference comes from the spreading speed when spreading happens. More precisely, Du et al \cite{DLZ} proved that there exists a finite spreading speed for \eqref{1.0} if and only if a threshold condition is valid for kernel $J$. However, it can be learned from \cite{DL} or \cite{Du22} that the spreading speed of the free boundary problem with local diffusion is always finite.

 Motivated by  the above work, Li et al. \cite{LLW22} investigated the following free boundary problem
 \bes\label{1.1}
\left\{\begin{aligned}
&u_t=d\dd\int_{0}^{\yy}J(x-y)u(t,y)\dy-du+f(u), && t>0,~0\le x<h(t),\\
&u(t,x)=0,&& t>0,~ x\ge h(t),\\
&h'(t)=\mu\dd\int_{0}^{h(t)}\int_{h(t)}^{\infty}
J(x-y)u(t,x)\dy\dx,&& t>0,\\
&h(0)=h_0,\;\; u(0,x)=u_0(x),&& 0\le x\le h_0,
\end{aligned}\right.
 \ees
where $d,\mu$ and $h_0$ are positive constants, conditions {\bf (J)} and {\bf (F)} hold for kernel $J$ and nonlinear term $f$, respectively. The initial function $u_0$ is assumed to satisfy that $u_0\in C([0,h_0])$ and $u_0(h_0)=0<u_0(x)$ for $x\in[0,h_0)$. In model \qq{1.1}, the range $0\le x<h(t)$ of species indicates that species can only expand their habitat towards the right, while the left boundary $x=0$ is fixed and the species can cross this boundary but they will die once they do it, which is different from problem \eqref{1.0} where the species can extend their habitat in two directions. It has been shown in \cite{LLW22} that problem \eqref{1.1} has a unique global solution $(u,h)$ whose longtime behavior is governed by a spreading-vanishing dichotomy. Namely, one of the following alternatives happens for \eqref{1.1}:
\vspace{-1mm}
\begin{enumerate}
\item[{\rm(i)}]\, \underline{Spreading:} $\lim_{t\to\yy}h(t)=\yy$ and $\lim_{t\to\yy}u(t,x)=U(x)$ locally uniformly in $[0,\yy)$, where $U(x)$ is the unique bounded positive solution of
\bes\label{1.2}
  d\dd\int_{0}^{\yy}J(x-y)U(y)\dy-dU+f(U)=0 {\rm ~ ~ ~ in ~ }\;[0,\yy).
  \vspace{-2mm}\ees
\item[{\rm(ii)}]\, \underline{Vanishing:} $\lim_{t\to\yy}h(t)<\yy$ and $\lim_{t\to\yy}\|u(t,\cdot)\|_{C([0,h(t)])}=0$.
    \vspace{-2mm}
 \end{enumerate}

Some sharp criteria for spreading and vanishing were also derived in \cite{LLW22}. Moreover, when spreading happens, the authors proved that if kernel function $J$ satisfies
  \begin{enumerate}
\item[{\bf(J1)}]$\dd\int_{0}^{\yy}xJ(x)\dx<\yy$,
 \end{enumerate}
then the following rough estimate for the spreading speed of free boundary $h(t)$ holds:
\bess
\frac{U(0)}{u^*} c_\mu\le\liminf_{t\to\yy}\frac{h(t)}{t}\le\limsup_{t\to\yy}\frac{h(t)}{t}\le  c_\mu,
\eess
where positive constant $c_\mu$ is uniquely determined by the so-called semi-wave problem
\bes\label{1.3}\left\{\begin{array}{lll}
 d\dd\int_{-\yy}^{0}J(x-y)\phi(y)\dy-d\phi+c\phi'+f(\phi)=0, \quad -\yy<x<0,\\[1mm]
\phi(-\yy)=u^*,\ \ \phi(0)=0, \ \ c=\mu\dd\int_{-\yy}^{0}\int_{0}^{\yy}J(x-y)\phi(x)\dy\dx;
 \end{array}\right.
 \ees
 if $J$ violates {\bf (J1)}, then
 \bess
 \lim_{t\to\yy}\frac{h(t)}{t}=\yy,
 \eess
 which implies that the spreading speed is infinite and usually called the accelerated spreading.

In \cite{LLW22}, we conjectured that when spreading happens and {\bf (J1)} holds, $c_\mu$ is the exact spreading speed of $h(t)$, i.e.,
   \bess
 \lim_{t\to\yy}\frac{h(t)}{t}=c_\mu.
 \eess
 However, since solution component $u$ converges to the non-constant steady state solution $U(x)$ when spreading happens, in \cite{LLW22} we fail to construct some desired lower solutions to prove that $c_\mu$ is the exact spreading speed. In this paper, we introduce a new lower solution for \eqref{1.1} which help us to obtain the results as wanted. Besides, we get the asymptotical behaviors of solution component $u$ which show that $c_\mu$ is the asymptotical spreading speed of $u$. Below is our first main result.

\begin{theorem}\label{t1.1}Let $(u,h)$ be the unique solution of \eqref{1.1}. If  spreading happens, then we have
 \bess
 \left\{\begin{array}{lll}
 \dd\lim_{t\to\yy}\frac{h(t)}{t}=c_\mu ~ ~ {\rm and ~ ~ }\lim_{t\to\yy}\max_{x\in[0,ct]}|u(t,x)-U(x)|=0, ~ ~\forall\, c\in[0, c_\mu) ~ ~ {\rm if ~ {\bf (J1)} ~ holds},\\[3mm]
  \dd\lim_{t\to\yy}\frac{h(t)}{t}=\yy ~ ~ {\rm and ~ ~ }\lim_{t\to\yy}\max_{x\in[0,ct]}|u(t,x)-U(x)|=0, ~ ~\forall\, c\ge0 ~ ~ {\rm if ~ {\bf (J1)} ~ is ~ violated}.
  \end{array}\right.
 \eess
 \end{theorem}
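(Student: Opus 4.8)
The plan is to establish the two conclusions in tandem, treating the convergence $u(t,x)\to U(x)$ on expanding intervals and the speed of $h(t)$ as mutually reinforcing. The upper estimate $\limsup h(t)/t\le c_\mu$ (under {\bf (J1)}) is already available from \cite{LLW22}, so the core work is the matching lower bound $\liminf h(t)/t\ge c_\mu$, and the key new ingredient — as the abstract promises — is a lower solution that is adapted to the non-constant steady state $U$ rather than to the constant $u^*$. Concretely, I would fix $c<c_\mu$ and, using continuity of the map $c\mapsto$ (semi-wave profile) in \eqref{1.3}, pick $c<c'<c_\mu$ with semi-wave $\phi_{c'}$; then build a subsolution of the form $\underline u(t,x)=(1-\eta)\,\psi\big(x-\xi(t)\big)$ on $0\le x\le \underline h(t):=\xi(t)$, where $\psi$ is a suitably truncated/shifted version of $\phi_{c'}$ glued to $U$ near $x=0$, $\xi'(t)\approx c$, and $\eta>0$ small. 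The novelty is that near the fixed boundary $x=0$ the profile must be comparable to $U$ (not to $u^*$), since that is what the true solution looks like there once spreading has occurred; matching the nonlocal term $d\int_0^\infty J(x-y)\underline u-d\underline u$ against $f(\underline u)$ in that transition region is exactly the computation that failed in \cite{LLW22} and must now be made to work, presumably by exploiting the KPP concavity condition ($f(u)/u$ strictly decreasing) to absorb error terms.

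The steps I would carry out, in order: (1) Recall from \cite{LLW22} that spreading implies $h(t)\to\infty$, $u(t,x)\to U(x)$ locally uniformly, and $\limsup h(t)/t\le c_\mu$ under {\bf (J1)}; also recall the asymptotics of the semi-wave $\phi_c$ and the monotonicity/continuity of $c_\mu$ in the relevant parameters. (2) For $c<c_\mu$, construct the explicit lower solution $(\underline u,\underline h)$ just described, verifying the three differential-inequality conditions: the interior inequality for $\underline u$, the boundary inequality $\underline h'(t)\le \mu\int_0^{\underline h}\int_{\underline h}^\infty J(x-y)\underline u\,dy\,dx$, and the ordering at an initial time $t_0$ (which holds because by local uniform convergence $u(t_0,\cdot)$ is already close to $U$ on $[0,\text{large}]$, while $\underline u$ is a small multiple of a profile supported on a bounded set). (3) Apply the comparison principle for \eqref{1.1} (available from \cite{LLW22}) to get $h(t)\ge \underline h(t)$ for $t\ge t_0$, hence $\liminf h(t)/t\ge c$; let $c\uparrow c_\mu$. (4) For the convergence statement, fix $c\in[0,c_\mu)$: on one hand $u\le \bar U$ for the obvious supersolution built from the stationary equation \eqref{1.2} extended by zero, giving the upper envelope; on the other hand, the same lower solution (or a family of them with translated feet) pushed forward shows $\liminf_{t\to\infty}\min_{x\in[0,ct]}u(t,x)\ge (1-\eta)\min U$-type bounds, and then a squeezing/sliding argument — comparing with the solution of \eqref{1.1} started from a large $h_0$ and using that $U$ is the unique bounded positive stationary solution and is globally attracting — upgrades this to $\max_{x\in[0,ct]}|u(t,x)-U(x)|\to0$. (5) In the case {\bf (J1)} violated, $h(t)/t\to\infty$ is already in \cite{LLW22}; since then $h(t)$ eventually dominates $ct$ for every fixed $c$ with arbitrarily large margin, the convergence $\max_{x\in[0,ct]}|u-U|\to0$ follows from the $c<c_\mu$-type argument applied with $c_\mu$ replaced by the (now infinite) speed, i.e.\ directly from local-in-space convergence combined with the fact that $[0,ct]$ is eventually swallowed well inside the occupied region.

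The main obstacle is Step (2): designing the lower solution so that the interior differential inequality holds \emph{uniformly in the transition zone} where the profile interpolates between $\phi_{c'}$ (valid far from $x=0$, i.e.\ for $x-\xi(t)$ in a bounded window) and $U$ (valid near $x=0$). The tension is that $\phi_{c'}$ solves a half-line equation with integration over $(-\infty,0)$ and carries the drift term $c\phi'$, whereas $U$ solves \eqref{1.2} with integration over $(0,\infty)$ and no drift; reconciling the two nonlocal operators across the gluing region, while keeping the advance rate $\xi'(t)$ at least $c$ and keeping the free-boundary inequality satisfied, is precisely where a clever choice — e.g.\ a time-dependent cutoff, or a profile of the form $\min\{U(x),(1-\eta)\phi_{c'}(x-\xi(t))+ \text{correction}\}$ — must be made, and verifying it will consume most of the technical effort. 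I expect the KPP condition {\bf (F)} and the explicit decay rate of $\phi_{c'}(-\infty-\text{approach to }u^*)$ to be the levers that make the error terms negative.
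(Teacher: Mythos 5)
Your plan correctly identifies the known ingredients (the upper bound $\limsup_{t\to\infty}h(t)/t\le c_\mu$ from \cite{LLW22}, locally uniform convergence $u\to U$, and the reduction of everything to a travelling lower solution), and your Steps (3)--(5) would go through in outline once Step (2) is done. But Step (2) is exactly where the proof lives, and you leave it unresolved: you propose to glue the semi-wave profile $\phi_{c'}$ to $U$ near $x=0$ and then acknowledge that verifying the interior inequality in the transition zone ``will consume most of the technical effort,'' expressing only the expectation that the KPP structure will absorb the errors. That is the very step that defeated \cite{LLW22}, and nothing in your outline shows how to control the mismatch between the two nonlocal operators (integration over $(0,\infty)$ with no drift for $U$, versus integration over $(-\infty,0)$ with the drift $c\phi'$ for the semi-wave) across the gluing region. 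A secondary problem: for $c'<c_\mu$ a profile $\phi_{c'}$ solving only the first two equations of \eqref{1.3} does not automatically satisfy the free-boundary inequality $\underline h'\le\mu\int\!\!\int J\,\underline u$, so that condition would need its own argument.

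The paper's construction avoids the gluing entirely, and this is the idea you are missing. One truncates $J$ to a compactly supported kernel $J_n$ with support $[-2n,2n]$, takes the semi-wave pair $(c_{n,\mu},\phi_n)$ of the truncated problem \eqref{2.1} (so the identity $c_{n,\mu}=\mu\int\!\!\int J_n\phi_n$ holds exactly, which makes the free-boundary inequality for $(1-\ep)\phi_n$ advancing at the slower speed $(1-2\ep)c_{n,\mu}$ immediate), and runs the comparison not on $[0,\underline h(t)]$ but on $[X_\ep,\underline h(t)]$, where $X_\ep$ is chosen so large that $U_n>(1-\ep/2)u_n^*$ for $x\ge X_\ep$. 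Because $J_n$ has support radius $2n$, the interior differential inequality for $\underline u(t,x)=(1-\ep)\phi_n(x-\underline h(t))$ need only be checked for $x\ge X_\ep+4n$, where the truncated nonlocal operator over $[X_\ep,\underline h(t)]$ coincides with the semi-wave operator; on the finite collar $[X_\ep,X_\ep+4n]$ one needs only the pointwise ordering $u\ge\underline u$, which follows directly from locally uniform convergence, since there $u\ge(1-\ep)u_n^*\ge(1-\ep)\phi_n$. A comparison principle tailored to this collar setup (Lemma \ref{l2.1}, with the pair $s(t)\le r(t)$) then gives $h\ge\underline h$, and letting $\ep\to0$ and $n\to\infty$ (using $c_{n,\mu}\to c_\mu$ and the uniform convergence $U_n\to U$ of Lemma \ref{l2.3}) yields both the exact speed and, with the same lower solution, the convergence on $[0,ct]$. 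So the ``clever choice'' you were searching for is not a glued profile but a truncated kernel together with a shifted comparison domain; without it, your Step (2) remains a genuine gap.
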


We also check the longtime behaviors of the level set
   $$E_{\lambda}(t)=\{x\ge0: u(t,x)=\lambda, ~ \forall\,\lambda\in(0,u^*)\},$$
and find an interesting propagation phenomenon. That is, for $\lambda\in[U(0),u^*)$, $\inf E_{\lambda}(t)$ will converges to a unique nonnegative constant, while for $\lambda\in (0,U(0))$, $\inf E_{\lambda}(t)/t$ will converges to $c_\mu$ or infinity, depending on whether {\bf (J1)} holds for kernel $J$.

 \begin{theorem}\label{t1.2}Assume that spreading happens. Let $(u,h)$ be the unique solution of \eqref{1.1}. Then the following statements hold.
\begin{enumerate}[$(1)$]
\item For any $\lambda\in[U(0),u^*)$, we have
\[\lim_{t\to\yy}\inf E_{\lambda}(t)=X_{\lambda},\]
where $X_{\lambda}$ is uniquely determined by $U(X_{\lambda})=\lambda$. This clearly implies
\[\lim_{t\to\yy}\frac{\inf E_{\lambda}(t)}{t}=0.\]
\item For any $\lambda\in(0,U(0))$, we have
\bess
 \left\{\begin{array}{lll}
 \dd\lim_{t\to\yy}\frac{\inf E_{\lambda}(t)}{t}=c_\mu\; ~ ~ {\rm if ~ {\bf (J1)} ~ holds},\\[3mm]
  \dd\lim_{t\to\yy}\frac{\inf E_{\lambda}(t)}{t}=\yy\; ~ ~ {\rm if ~ {\bf (J1)} ~ is ~ violated}.
  \end{array}\right.
 \eess
\item For any $\lambda\in(0,u^*)$, we have
\bess
 \left\{\begin{array}{lll}
 \dd\lim_{t\to\yy}\frac{\sup E_{\lambda}(t)}{t}=c_\mu ~ ~ {\rm if ~ {\bf (J1)} ~ holds},\\[3mm]
  \dd\lim_{t\to\yy}\frac{\sup E_{\lambda}(t)}{t}=\yy ~ ~ {\rm if ~ {\bf (J1)} ~ is ~ violated}.
  \end{array}\right.
 \eess
\end{enumerate}
 \end{theorem}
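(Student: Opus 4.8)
The plan is to reduce everything to Theorem~\ref{t1.1} together with three elementary ingredients: (a) for each $t>0$ the map $x\mapsto u(t,x)$ is continuous on $[0,h(t)]$ with $u(t,h(t))=0$ (part of the regularity of the solution established in \cite{LLW22}), so the intermediate value theorem is available; (b) since $\lambda>0$ and $u(t,x)=0$ for $x\ge h(t)$, one has $E_\lambda(t)\subseteq[0,h(t))$, hence $\sup E_\lambda(t)\le h(t)$; and (c) the steady state $U$ of \eqref{1.2} is continuous and strictly increasing on $[0,\yy)$ with $U(0)\in(0,u^*)$ and $U(+\yy)=u^*$, so that $U$ maps $[0,\yy)$ bijectively onto $[U(0),u^*)$, $X_\lambda:=U^{-1}(\lambda)$ is well defined, $X_\lambda\to0$ as $\lambda\downarrow U(0)$, and $X_{U(0)}=0$. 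Throughout I will use, from Theorem~\ref{t1.1}, that $\max_{x\in[0,ct]}|u(t,x)-U(x)|\to0$ for every $c\in[0,c_\mu)$ when {\bf (J1)} holds (resp.\ for every $c\ge0$ when {\bf (J1)} is violated), together with $h(t)/t\to c_\mu$ (resp.\ $h(t)/t\to\yy$); below, ``eventually'' means ``for all $t$ large enough''.

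For Part~(1) with $\lambda\in(U(0),u^*)$, so $X_\lambda>0$, I would fix a small $\ep\in(0,X_\lambda)$ and use $U(X_\lambda-\ep)<\lambda<U(X_\lambda+\ep)$: by the uniform convergence on $[0,X_\lambda+\ep]$, eventually $u(t,\cdot)<\lambda$ on $[0,X_\lambda-\ep]$ (giving $\inf E_\lambda(t)\ge X_\lambda-\ep$) and $u(t,X_\lambda+\ep)>\lambda$, so the intermediate value theorem on $[X_\lambda-\ep,X_\lambda+\ep]$ produces a point of $E_\lambda(t)$ there, i.e.\ $\inf E_\lambda(t)\le X_\lambda+\ep$; letting $\ep\downarrow0$ yields $\inf E_\lambda(t)\to X_\lambda$. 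The endpoint $\lambda=U(0)$ is where the only real difficulty lies: there $X_\lambda=0$ and the lower bound is trivial, but to obtain $\inf E_{U(0)}(t)\le\ep$ eventually one must show that $u(t,\cdot)$ actually attains the value $U(0)$ somewhere in $[0,\ep]$, i.e.\ that $u(t,\cdot)$ does not stay strictly above $U(0)$ near the origin for all large $t$; I expect to deduce this from the fact that $u$ is a strict subsolution of the nonlocal equation \emph{without} free boundary on $[0,\yy)$, whence $\limsup_{t\to\yy}u(t,x)\le U(x)$ locally uniformly, combined with $u(t,0)\to U(0)$ from Theorem~\ref{t1.1}. This is the step I expect to be the main obstacle.

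For Part~(2), let $\lambda\in(0,U(0))$. Since $U\ge U(0)>\lambda$ on $[0,\yy)$, the uniform convergence on $[0,ct]$ forces $u(t,\cdot)>\lambda$ on $[0,ct]$ eventually, so $E_\lambda(t)\subseteq(ct,\yy)$ and $\inf E_\lambda(t)\ge ct$. Fixing any $x_0>0$, one has $u(t,x_0)\to U(x_0)>\lambda$ while $u(t,h(t))=0<\lambda$ and $h(t)>x_0$ eventually, so $E_\lambda(t)\neq\emptyset$ and, by (b), $\inf E_\lambda(t)<h(t)$. Hence
\[
 c\le\liminf_{t\to\yy}\frac{\inf E_\lambda(t)}{t}\le\limsup_{t\to\yy}\frac{\inf E_\lambda(t)}{t}\le\limsup_{t\to\yy}\frac{h(t)}{t}
\]
for every admissible $c$; when {\bf (J1)} holds $c$ ranges over $[0,c_\mu)$ and $h(t)/t\to c_\mu$, giving the limit $c_\mu$, and when {\bf (J1)} is violated $c$ ranges over $[0,\yy)$, forcing the limit $+\yy$.

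For Part~(3), let $\lambda\in(0,u^*)$. By (b), $\sup E_\lambda(t)<h(t)$, so $\limsup_{t\to\yy}\sup E_\lambda(t)/t\le\limsup_{t\to\yy}h(t)/t$. For the reverse inequality, fix an admissible $c>0$; from Theorem~\ref{t1.1}, $|u(t,ct)-U(ct)|\le\max_{x\in[0,ct]}|u(t,x)-U(x)|\to0$ while $U(ct)\to u^*$, so $u(t,ct)\to u^*>\lambda$ and thus $u(t,ct)>\lambda$ eventually, whereas $u(t,h(t))=0<\lambda$ and $h(t)>ct$ eventually, so the intermediate value theorem yields a point of $E_\lambda(t)$ in $(ct,h(t))$ and $\sup E_\lambda(t)>ct$; letting $c\uparrow c_\mu$ (resp.\ $c\to\yy$) and comparing with the upper bound gives $\sup E_\lambda(t)/t\to c_\mu$ (resp.\ $\to\yy$). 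In summary, Parts~(2), (3) and the interior case of Part~(1) are short consequences of Theorem~\ref{t1.1}, the intermediate value theorem and the monotone profile of $U$, while the delicate point is the endpoint $\lambda=U(0)$ in Part~(1).
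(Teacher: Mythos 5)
Your proposal follows essentially the same route as the paper's proof (Lemma \ref{l2.6}): parts (2), (3) and the case $\lambda\in(U(0),u^*)$ of part (1) are obtained exactly as there, by combining Theorem \ref{t1.1} (i.e.\ Lemmas \ref{l2.4}--\ref{l2.5}) with the intermediate value theorem, the fact $u(t,h(t))=0$, and the strict monotonicity of $U$ up to $u^*$. These portions of your argument are correct and match the paper's.

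The one point you flag as the ``main obstacle'' --- the endpoint $\lambda=U(0)$, where $X_\lambda=0$ --- is indeed the only delicate spot, and your proposed fix does not close it: knowing $\limsup_{t\to\infty}u(t,x)\le U(x)$ uniformly and $u(t,0)\to U(0)$ only yields $u(t,0)\le U(0)+\delta$ for large $t$, which does not force $u(t,\cdot)$ to attain the exact value $U(0)$ somewhere in $[0,\ep]$; if $u(t,\cdot)$ approaches $U$ from above near the origin and stays strictly above $U(0)$ on a long initial interval, then $\inf E_{U(0)}(t)$ would be large at such times, and nothing in Theorem \ref{t1.1} excludes this. You should know, however, that the paper's own proof has the identical lacuna: it argues via $u(t,X_\lambda-\ep)\to U(X_\lambda-\ep)<\lambda$ together with $u(t,X_\lambda+\ep)\to U(X_\lambda+\ep)>\lambda$, which tacitly requires $X_\lambda-\ep\ge0$, i.e.\ $X_\lambda>0$. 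So your write-up is on par with the paper at $\lambda=U(0)$ and equivalent to it everywhere else; genuinely closing the endpoint case would require an extra input (for instance, that $u(t,\cdot)$ eventually lies below $U$ near $x=0$, e.g.\ via a comparison with $U$ as a supersolution when $u_0\le U$, or some monotone-approach argument), which neither you nor the paper supplies.
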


 Then we study the rate of accelerated spreading for a class of algebraic decay kernel functions, namely,
 $J$ satisfies
\begin{enumerate}
\item[${\bf(J^\gamma)}$] there exist positive constants $\varsigma_1$ and $\varsigma_2$ such that $\varsigma_1|x|^{-\gamma}\le J(x)\le \varsigma_2|x|^{-\gamma}$ for $|x|\gg 1$.
 \end{enumerate}
 Due to condition {\bf (J)}, we have $\gamma>1$. Clearly, {\bf (J1)} holds if and only if $\gamma>2$. We focus on the case $\gamma\in(1,2]$, which, by Theorem \ref{t1.1}, implies that accelerated spreading happens. In order to simplify our presentation, we make some notations. For any given two functions $s(t)$ and $r(t)$, we say $s(t)\approx r(t)$ if there exist two positive constants $c_1$ and $c_2$ such that $c_1r(t)\le s(t)\le c_2r(t)$ for $t\gg 1$; we say $s(t)=o(r(t))$ if $\lim_{t\to\infty}\frac{s(t)}{\gamma(t)}=0$. Here is our main related result.

 \begin{theorem}\label{t1.3}Suppose that spreading happens and ${\bf(J^\gamma)}$ holds with $\gamma\in(1,2]$. Let $(u,h)$ be the unique solution of \eqref{1.1}. Then we have
 \bess
 \left\{\begin{array}{lll}
 \dd h(t)\approx t^{\frac{1}{\gamma-1}}, ~ ~\; \lim_{t\to\yy}\max_{x\in[0,s(t)]}|u(t,x)-U(x)|=0 {\rm ~ for ~ any ~ }s(t)=o(t^{\frac{1}{\gamma-1}}) \;~ {\rm if ~}\gamma\in(1,2),\\
  \dd h(t)\approx t\ln t, ~  ~\; \lim_{t\to\yy}\max_{x\in[0,s(t)]}|u(t,x)-U(x)|=0 {\rm ~ for ~ any ~ }s(t)=o(t\ln t)\; ~ {\rm if ~}\gamma=2.
  \end{array}\right.
 \eess
 \end{theorem}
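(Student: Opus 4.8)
The plan is to bracket the growth order of $h(t)$ by the comparison principle of \cite{LLW22}, using an almost explicit super-solution for the upper bound and a carefully built lower solution (the ``new lower solution'' of the paper) for the lower bound, and to read off the convergence $u\to U$ on $[0,s(t)]$ from the same lower solution together with a standard upper barrier. Throughout write $K(s):=\int_s^\yy J(z){\rm d}z$, so that ${\bf(J^\gamma)}$ gives $K(s)\approx s^{1-\gamma}$ as $s\to\yy$, while $K(0)=\tfrac12$ and $K$ is nonincreasing; put $M:=\max\{u^*,\|u_0\|_{L^\yy}\}$.

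\emph{Upper bound.} Since $f(M)\le0$ by ({\bf F}) and $\int_0^\yy J(x-y)\dy\le1$, the pair $(\bar u,\bar h)$ with $\bar u\equiv M$ on $[0,\bar h(t)]$ and $\bar h(0)=h_0$, $\bar h'(t)=\mu M\int_0^{\bar h(t)}K(\bar h(t)-x)\dx$, is a super-solution of \eqref{1.1}, hence $h(t)\le\bar h(t)$. Spreading forces $\bar h(t)\to\yy$, so from the tail estimate $\int_0^{\bar h}K(\bar h-x)\dx=\int_0^{\bar h}K(s){\rm d}s\le C(1+\bar h^{2-\gamma})$ when $\gamma\in(1,2)$ and $\le C(1+\ln\bar h)$ when $\gamma=2$. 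Integrating $\bar h'\le C\bar h^{2-\gamma}$ yields $\bar h(t)^{\gamma-1}\le Ct+C'$, i.e. $h(t)\lesssim t^{1/(\gamma-1)}$; for $\gamma=2$, $\bar h'\le C\ln\bar h$ gives successively $\bar h\lesssim e^{Ct}$, then $\bar h'\lesssim t$, then $\bar h\lesssim t^2$, then $\bar h'\lesssim\ln t$, so $h(t)\lesssim t\ln t$.

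\emph{Lower bound and convergence.} Fix small $\ep>0$. I would construct a sub-solution $(\underline u,\underline h)$ on $[\tau,\yy)$ ($\tau$ large) whose free boundary solves $\underline h'(t)=c_0\,\underline h(t)^{2-\gamma}$ (resp. $\underline h'(t)=c_0\ln\underline h(t)$) for a small $c_0>0$, so that $\underline h(t)\sim(c_0(\gamma-1)t)^{1/(\gamma-1)}$ (resp. $\sim c_0\,t\ln t$) irrespective of the small starting value $\underline h(\tau)$, and whose profile equals $\sigma(t)U(x)$ on the bulk $[0,\underline h(t)-\ell(t)]$ with $\sigma(t)\nearrow 1-\ep$, and is a slowly varying front profile on the layer $[\underline h(t)-\ell(t),\underline h(t)]$, with layer width $\ell(t)$ increasing (at least like $\underline h'(t)$, which is forced since for accelerated spreading a drifting layer of fixed width generates an unbounded transport term $\underline h'(t)\,\partial_x\underline u$) but still sublinear in $\underline h(t)$. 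The nonlocal-equation inequality on the bulk reduces, using that $U$ solves \eqref{1.2}, to the KPP monotonicity $f(\sigma U)\ge\sigma f(U)$ minus the convolution deficit coming from the mass beyond the layer; this deficit is of order $\sigma(t)K(\ell(t))$ while the ``KPP reserve'' $f(\sigma U)-\sigma f(U)$ is $\gtrsim\sigma(t)$, so the inequality closes once $\ell(t)$ is large and $\sigma'(t)/\sigma(t)$ is small. In the layer one uses $f(s)\ge\kappa s$ for small $s$ (with $\kappa>0$) together with the near-front smallness of the profile and of $\partial_t\underline u$. The free-boundary inequality $\underline h'(t)\le\mu\int_0^{\underline h(t)}\int_{\underline h(t)}^\yy J(x-y)\underline u(t,x)\dy\dx$ holds because $\underline u\ge\sigma(t)U(0)>0$ on the bulk makes the right side $\ge c\int_{\ell(t)}^{\underline h(t)}K(s){\rm d}s\approx\underline h^{2-\gamma}$ (resp. $\approx\ln\underline h$, where the sublinearity of $\ell(t)$ keeps this integral of the right order), which dominates $c_0\underline h^{2-\gamma}$ (resp. $c_0\ln\underline h$) once $c_0$ is small. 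The initial ordering $h(\tau)\ge\underline h(\tau)$ and $u(\tau,\cdot)\ge\underline u(\tau,\cdot)$ is arranged by taking $\underline h(\tau)$ and $\sigma(\tau)$ small and $\tau$ large (so that, by spreading, $h$ has overtaken $\underline h(\tau)$ and $u$ is already $\ge\sigma(\tau)U$ on the short interval $[0,\underline h(\tau)]$); passing to the target rate and to $\ell(t)$ large happens automatically as $t$ grows. Comparison then gives $h(t)\ge\underline h(t)$ and $u(t,x)\ge\sigma(t)U(x)$ on $[0,\underline h(t)-\ell(t)]$. Combined with the upper bound this yields $h(t)\approx t^{1/(\gamma-1)}$ (resp. $t\ln t$); and for any $s(t)=o(t^{1/(\gamma-1)})$ (resp. $o(t\ln t)$) we have $s(t)\le\underline h(t)-\ell(t)$ for $t$ large, so $u(t,x)\ge\sigma(t)U(x)\ge(1-2\ep)U(x)$ there for $t$ large, whence $\liminf_{t\to\yy}\inf_{[0,s(t)]}(u-U)\ge-2\ep u^*$; letting $\ep\to0$ gives the lower half of the convergence. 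The matching upper half $u(t,x)\le U(x)+o(1)$ on $[0,s(t)]$ follows from a super-solution $U+\ep\vp$ with $\vp>0$ the principal eigenfunction of the linearization of \eqref{1.2} at $U$ (its principal eigenvalue is negative as $U$ is stable), exactly as in the locally-uniform-convergence argument of \cite{LLW22}.

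\emph{Main obstacle.} The heart of the matter is designing this sub-solution so that the equation inequality holds throughout the growing, moving layer — balancing the transport term $\underline h'(t)\,\partial_x\underline u$ and the $\dot\ell(t)$ term against the KPP reserve (which is small near the free boundary), while simultaneously keeping $\ell(t)$ sublinear in $\underline h(t)$ so that the free-boundary integral retains the order $\underline h^{2-\gamma}$ (resp. $\ln\underline h$). The borderline case $\gamma=2$ is the tightest, since there that integral is only logarithmic in $\underline h$ and leaves the least slack; once a sub-solution of this shape is in hand, the remaining arguments are the differential-inequality bookkeeping described above.
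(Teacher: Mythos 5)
Your architecture matches the paper's: the upper bound is obtained exactly as in the paper's Lemma 3.1 (constant super-solution $M$, then integrating $\bar h'\lesssim \bar h^{2-\gamma}$, resp. $\lesssim\ln\bar h$), and the lower bound comes from a sub-solution consisting of a plateau plus a moving front layer whose free boundary solves $\underline h'=c_0\underline h^{2-\gamma}$ (resp. $c_0\ln\underline h$), compared via the problem's comparison principle; the convergence on $[0,s(t)]$ is then read off the plateau. Two remarks. First, the paper executes the construction you only sketch by taking the plateau to be the constant $u^*-\sqrt\ep$ and the layer to be an explicit truncated-linear ramp $\min\{1,(k_2-x)/k_1\}$, for which a separate lemma (Lemma 3.2) gives $\int J(x-y)\psi(y)\,{\rm d}y\ge(1-\ep)\psi(x)$ everywhere, including at the bulk/layer interface; with your choice of bulk profile $\sigma(t)U(x)$ the "convolution deficit" is \emph{not} of order $\sigma K(\ell)$ for $x$ within distance $O(1)$ of the interface, so the bulk inequality as you state it does not close there and needs the kind of interpolation Lemma 3.2 provides. (Also note the paper only needs a sublinear layer width for $\gamma=2$; for $\gamma\in(1,2)$ it uses a layer of width $\underline h/2$ and the free-boundary integral still has order $\underline h^{2-\gamma}$.) Second, your upper barrier $U+\ep\vp$ rests on the existence of a positive principal eigenfunction, with negative eigenvalue, for the linearization of the nonlocal operator at $U$ on the half-line; nonlocal dispersal operators on unbounded domains need not possess such an eigenpair, so this step is not justified as written. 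It is also unnecessary: the paper gets $\limsup_{t\to\yy}u(t,x)\le U(x)$ uniformly on $[0,\yy)$ from the ODE comparison $u\le\hat u$ with $\hat u'=f(\hat u)$ (so $\limsup u\le u^*$ uniformly), the fact that $U(x)\nearrow u^*$, and the locally uniform convergence $u\to U$ — a substitution you should make. With these two repairs your proof coincides with the paper's.
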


 As an application of our above result, we also investigate the asymptotical behaviors of the following nonlocal diffusion problem on half line
 \bes\left\{\begin{aligned}\label{1.6}
&w_t= d\int_{0}^{\yy}J(x-y)w(t,y)\dy-dw+f(w), && t>0,\ x\ge0,\\
&w(0,x)=w_0(x),  && x\ge0,
 \end{aligned}\right.\ees
 where $w_0\in C_{c}([0,\yy))$ and $w_0(x)\ge0,\not\equiv0$ in $[0,\yy)$. It is not hard to show that problem \eqref{1.6} can be seen as the limiting problem of \eqref{1.1} as $\mu\to\yy$ and the solution component $u$ of \eqref{1.1} is a lower solution to this problem. Thus we can use Theorem \ref{t1.1} to derive the asymptotical behaviors of \eqref{1.6}. Please see Theorem \ref{t2.1} for the details.

Before finishing the introduction, we would like to mention the recent progress made on nonlocal diffusion problem \eqref{1.0} and its variations. Du and Ni \cite{DN1,DN2} studied the rate of accelerated spreading for \eqref{1.0} by constructing some subtle upper and lower solutions. In \cite{DN3}, they also discussed the approximation of the model in \cite{DL} by the variation of \eqref{1.0}. The time-periodic version of \eqref{1.0} was considered by \cite{ZLZ}. Moreover, for the system, one can refer to \cite{DN4,DN5,ZZLD,NV,PLL,LLW24,LW24} for the cooperative system and \cite{LSW,DWZ,DNS1,DNS2} for the competition or the prey-predator system.

This paper is arranged as follows. Section 2 involves the proofs of Theorem \ref{t1.1} and \ref{t1.2}. Section 3 is devoted to the proof of Theorem \ref{t1.3}. Throughout this paper, we always assume that {\bf (J)} and {\bf (F)} hold for kernel $J$ and nonlinear term $f$, respectively.

\section{Spreading speed}
\setcounter{equation}{0} {\setlength\arraycolsep{2pt}
In this section, we first prove Theorem \ref{t1.1} by resorting to a new lower solution, and then use the results in Theorem \ref{t1.1} and some basic analysis to show Theorem \ref{t1.2}. The following comparison principle will be often used in this paper.

\begin{lemma}\label{l2.1} For any $T>0$, we assume that $s(t)$ and $r(t)$ are continuous and non-decreasing in $[0,T]$, as well as $\underline{h}(t),\; \bar{h}(t)\in C^1([0,T])$ with $s(t)\le r(t)\le \min\{\underline{h}(t),\bar{h}(t)\}$ for $t\in[0,T]$. Let $\underline{u}\in C([0,T]\times[s(t),\underline{h}(t)])$, $\bar{u}\in C([0,T]\times[s(t),\bar{h}(t)])$, the left derivatives $\underline{u}_t(t-0,x)$ and $\bar{u}_t(t-0,x)$ exist in $(0,T]\times[r(t),\underline{h}(t)]$ and $(0,T]\times[r(t),\bar{h}(t)]$, respectively. Suppose that
 \bes
 \underline{u}(t,x)\le \bar u(t,x)\;\;\;{\rm for}\;\; 0<t\le T, ~ x\in[s(t),r(t)],
 \lbl{2b.1}\ees
and $(\underline{u},\underline{h})$ and $(\bar{u},\bar{h})$ satisfy
\bess
\left\{\begin{aligned}
&\underline u_t(t-0,x)\le d\displaystyle\int_{s(t)}^{\underline h(t)}J(x-y)\underline u(t,y){\rm d}y-d\underline u+ f(\underline u), && 0<t\le T,~x\in[r(t),\underline h(t)),\\
&\underline u(t,\underline h(t))\le0,&& 0<t\le T,\\
&\underline h'(t)<\mu\displaystyle\int_{s(t)}^{\underline h(t)}\int_{\underline h(t)}^{\infty}
J(x-y)\underline u(t,x){\rm d}y{\rm d}x,&& 0<t\le T,\\
&\underline h(0)<\bar h(0),\;\;\underline u(0,x)\le \bar u(0,x),&& x\in[s(0),\underline h(0)]
\end{aligned}\right.
\eess
and
\bess
\left\{\begin{aligned}
&\bar u_t(t-0,x)\ge d\displaystyle\int_{s(t)}^{\bar h(t)}J(x-y)\bar u(t,y){\rm d}y-d\bar u+ f(\bar u), && 0<t\le T,~x\in[r(t),\bar h(t)),\\
&\bar u(t,\bar h(t))\ge0,&& 0<t\le T,\\
&\bar h'(t)\ge\mu\displaystyle\int_{s(t)}^{\bar h(t)}\int_{\bar h(t)}^{\infty}
J(x-y)\bar u(t,x){\rm d}y{\rm d}x,&& 0<t\le T,
\end{aligned}\right.
\eess
respectively. Then we have
\[\underline{h}(t)\le\bar{h}(t), ~ ~ ~ \underline{u}(t,x)\le \bar{u}(t,x)\;\; ~ {\rm for ~ }\;(t,x)\in[0,T]\times[s(t),\underline{h}(t)].\]
\end{lemma}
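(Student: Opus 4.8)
The plan is to establish the two orderings $\underline h\le\bar h$ and $\underline u\le\bar u$ together: the ordering of the $u$-components comes from a nonlocal maximum principle, and the ordering of the free boundaries from a first-crossing-time argument that uses it.

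First I would isolate the sub-claim that \emph{if $\underline h(t)\le\bar h(t)$ holds on some $[0,t_0]\subseteq[0,T]$, then $\underline u(t,x)\le\bar u(t,x)$ there for all $x\in[s(t),\underline h(t)]$}. Put $W=\bar u-\underline u$ on $[0,t_0]\times[s(t),\underline h(t)]$, which makes sense since $\underline h\le\bar h$. By \qq{2b.1} we have $W\ge0$ on $[s(t),r(t)]$, and $\underline u(t,\underline h(t))\le0$; hence any point with $W<0$ lies in the region $r(t)\le x<\underline h(t)$ where both differential inequalities are assumed to hold. Subtracting them gives there
\[
W_t(t-0,x)\ \ge\ d\!\int_{s(t)}^{\underline h(t)}\!J(x-y)W(t,y)\,\dy\ +\ d\!\int_{\underline h(t)}^{\bar h(t)}\!J(x-y)\bar u(t,y)\,\dy\ -\ dW\ +\ c(t,x)W,
\]
where $c(t,x)=\big(f(\bar u)-f(\underline u)\big)/(\bar u-\underline u)$ is bounded (by $f\in C^1$ and the boundedness of $\underline u,\bar u$) and the middle integral is nonnegative. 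Passing to $\widehat W=e^{-Kt}W$ with $K>d+\sup|c|$, the coefficient $c-d-K$ is strictly below $-d\le-d\int_{s(t)}^{\underline h(t)}J(x-y)\dy$, so at an interior negative minimum of $\widehat W(t,\cdot)$ the right-hand side is nonnegative. I would then follow $M(t):=\max_{x\in[s(t),\underline h(t)]}\widehat W^-(t,x)$ (continuous, $M(0)=0$), integrate the differential inequality at the minimizing location to get a Gronwall estimate $M(t)\le C\int_0^t M(\sigma)\,{\rm d}\sigma$, and conclude $M\equiv0$, i.e. $\underline u\le\bar u$ on $[0,t_0]$.

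Next, I would prove $\underline h\le\bar h$ on $[0,T]$ by contradiction. If $\underline h\ge\bar h$ somewhere, let $\tau$ be the first such time; since $\underline h(0)<\bar h(0)$ and both are $C^1$, one has $\tau\in(0,T]$, $\underline h<\bar h$ on $[0,\tau)$, $\underline h(\tau)=\bar h(\tau)=:H$, hence $\underline h'(\tau)\ge\bar h'(\tau)$. The sub-claim on $[0,\tau]$ gives $\underline u(\tau,\cdot)\le\bar u(\tau,\cdot)$ on $[s(\tau),H]$, so, using $J\ge0$ and the free-boundary relations,
\[
\underline h'(\tau)\ <\ \mu\!\int_{s(\tau)}^{H}\!\!\int_{H}^{\infty}\!\!J(x-y)\underline u(\tau,x)\,\dy\dx\ \le\ \mu\!\int_{s(\tau)}^{H}\!\!\int_{H}^{\infty}\!\!J(x-y)\bar u(\tau,x)\,\dy\dx\ \le\ \bar h'(\tau),
\]
a contradiction. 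Therefore $\underline h\le\bar h$ on all of $[0,T]$, and one last application of the sub-claim on $[0,T]$ gives $\underline u\le\bar u$ on $[0,T]\times[s(t),\underline h(t)]$.

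The main obstacle is the nonlocal maximum principle inside the sub-claim: the integral diffusion offers no parabolic smoothing or Hopf-type boundary lemma, so the sign must be propagated directly from the integral inequality, and this is made delicate by the fact that the spatial domain $[s(t),\underline h(t)]$ moves — the running minimizer $x^\ast(t)$ may sit on the moving endpoint $\underline h(t)$, where only $\underline u(t,\underline h(t))\le0$ is available. Turning the heuristic ``$\widehat W$ cannot decrease at its minimum'' into the rigorous Gronwall bound for $M(t)$ — justifying the one-sided time derivative at the running minimum, and exploiting the strict inequality in the free-boundary ODE for $\underline h$ so that no auxiliary $\ep$-perturbation is needed — is the delicate point; the algebraic steps (the subtraction, the bound on $c$, the choice of $K$) are routine. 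Finally, the nonnegativity of the gap integral $\int_{\underline h(t)}^{\bar h(t)}J(x-y)\bar u(t,y)\,\dy$ rests on $\bar u\ge0$ on that interval, which is part of the standing setup for supersolutions of \qq{1.1}.
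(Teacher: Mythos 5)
Your proposal is correct and follows essentially the same route as the paper: a first-crossing-time contradiction for the free boundaries, combined with a nonlocal maximum principle for $W=\bar u-\underline u$ via the exponential weight $e^{-Kt}W$ with $K>d+\sup|c|$, and the strict inequality in the ODE for $\underline h$ ruling out the touching time. The only cosmetic difference is that the paper closes the maximum principle by a direct contradiction at a global negative space-time minimum of the weighted difference rather than by your Gronwall bound on the running negative part, and it likewise relies implicitly on $\bar u\ge0$ on $[\underline h(t),\bar h(t)]$, exactly as you flag.
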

\begin{proof}We start with proving $\underline{h}(t)\le \bar{h}(t)$ for $t\in[0,T]$. Otherwise, since $\underline{h}(0)<\bar{h}(0)$, there exists a $t^*\in(0,T]$ such that $\underline{h}(t)<\bar{h}(t)$ for $t\in[0,t^*)$ and $\underline{h}(t^*)=\bar{h}(t^*)$. Clearly, $\underline{h}'(t^*)\ge\bar{h}'(t^*)$.

Now we compare $\underline{u}$ with $\bar{u}$ for $t\in[0,t^*]$ and $x\in[s(t),\underline{h}(t)]$. Denote $\tilde{u}=\bar{u}-\underline{u}$. By the inequalities satisfied by $\underline{u}$ and $\bar{u}$, we have
 \bess
\left\{\begin{aligned}
&\tilde u_t(t-0,x)\ge d\displaystyle\int_{r(t)}^{\underline h(t)}J(x-y)\tilde u(t,y){\rm d}y+c(t,x)\tilde u, && 0<t\le t^*,~x\in[r(t),\underline h(t)),\\
&\tilde u(t,\underline h(t))\ge0,~ \tilde u(t,r(t))\ge0&& 0<t\le t^*,\\
&\tilde{u}(0,x)\ge0, &&x\in[r(0),\underline{h}(0)],
\end{aligned}\right.
\eess
where $c(t,x)\in L^{\yy}$.  Let $\tilde{U}=\tilde u {\rm e}^{-kt}$ with $k>\|c\|_{\yy}+d$. If $\min_{[0,t^*]\times[r(t),\underline{h}(t)]}\tilde{U}(t,x)<0$, then there exists $(t_0,x_0)\in(0,t^*]\times(r(t),\underline{h}(t))$ such that $\tilde{U}(t_0,x_0)=\min_{[0,t^*]\times[r(t),\underline{h}(t)]}\tilde{U}(t,x)<0$. Certainly, $\tilde{U}_t(t_0-0,x_0)\le0$ and
 \[\int_{r(t_0)}^{\underline h(t_0)}J(x_0-y)\tilde U(t_0,y){\rm d}y-\int_{r(t_0)}^{\underline h(t_0)}J(x_0-y)\dy\tilde U(t_0,x_0)\ge0.\]
It follows that
 \bess
0&\ge&\tilde{U}_t(t_0-0,x_0)-d\int_{r(t_0)}^{\underline h(t_0)}J(x_0-y)\tilde U(t_0,y){\rm d}y+d\int_{r(t_0)}^{\underline h(t_0)}J(x_0-y)\dy\tilde U(t_0,x_0)\\
&\ge& d\int_{r(t_0)}^{\underline h(t_0)}J(x_0-y)\dy\tilde U(t_0,x_0)+(c(t_0,x_0)-k)\tilde{U}(t_0,x_0)\\
&>&0.
 \eess
This contradiction indicates that $\tilde{u}(t,x)\ge0$ for $t\in[0,t^*]$ and $x\in[r(t),\underline{h}(t)]$. This combined with \qq{2b.1} allows us to derive  $\tilde{u}(t,x)\ge0$ for $t\in[0,t^*]$ and $x\in[s(t),\underline{h}(t)]$. According to the differential inequalities of $\underline{h}'$ and $\bar{h}'$ we obtain
\bess
0\ge\bar{h}'(t^*)-\underline{h}'(t^*)>\mu\int_{s(t^*)}^{\underline{h}(t^*)}\int_{\underline{h}(t^*)}^{\yy}J(x-y)\tilde{u}(t^*,x)\dy\dx\ge0.
\eess
This contradiction implies that $\underline{h}(t)\le \bar{h}(t)$ for $t\in[0,T]$. Then similar to the above, we can deduce that $\underline{u}(t,x)\le \bar{u}(t,x)$ by comparing $\underline{u}$ and $\bar{u}$ over $[0,T]\times[s(t),\underline{h}(t)]$. The proof is ended.
\end{proof}

For the steady state problem \eqref{1.2}, we have the following result.

\begin{lemma}\label{l2.2}
Problem \eqref{1.2} has a unique bounded positive solution $U\in C([0,\yy))$, and $U(x)\to u^*$ as $x\to\yy$. Moreover, $0<U<u^*$ and $U(x)$ is strictly increasing in $[0,\yy)$.
\end{lemma}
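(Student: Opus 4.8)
The plan is to split the statement into three parts: existence, uniqueness, and the qualitative properties (positivity, the upper bound $u^*$, monotonicity, and the limit at infinity). For existence I would use a monotone iteration / sub-super-solution scheme on the half line. The constant $u^*$ is a supersolution of \eqref{1.2} since $d\int_0^\infty J(x-y)u^*\dy - du^* + f(u^*) = -du^*\int_x^\infty J(x-y)\dy \le 0$ using $f(u^*)=0$ and $\int_\R J=1$. For a subsolution I would take a small constant $\ep$ truncated appropriately, or more robustly pass to the limit of the Dirichlet problems on $[0,L]$: let $U_L$ solve the problem on $[0,L]$ with $U_L=0$ outside; these exist and are monotone in $L$ by a comparison argument built from Lemma \ref{l2.1} (its elliptic/stationary analogue), and are bounded above by $u^*$, so $U := \lim_{L\to\infty} U_L$ exists pointwise, is bounded, and solves \eqref{1.2} by dominated convergence in the integral term. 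Positivity of $U$ (i.e. $U\not\equiv 0$) follows because $f'(0)>0$ makes $0$ a strict subsolution once $L$ is large enough — this is exactly the sort of threshold computation that should already be available from \cite{LLW22}, and I would cite it rather than redo it.

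Next, for the qualitative properties, I would first show $0<U<u^*$ strictly: the strict inequality $U<u^*$ comes from a strong maximum principle for the operator $\mathcal{L}\phi = d\int_0^\infty J(\cdot-y)\phi(y)\dy - d\phi$, using that $J(0)>0$ and $J$ is continuous, so $J>0$ on a neighbourhood of $0$ and the nonlocal operator connects nearby points; if $U(x_0)=u^*$ at some interior point, evaluating the equation there forces $\int_0^\infty J(x_0-y)(u^*-U(y))\dy = u^*\int_{x_0}^\infty J(x_0-y)\dy>0$ against $f(u^*)=0$, a contradiction, and a similar argument handles $U>0$. For monotonicity I would use a sliding/translation argument: for $\tau>0$ compare $U(x)$ and $U(x+\tau)$ on $[0,\infty)$. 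The function $V_\tau(x) := U(x+\tau)$ satisfies $\mathcal{L}V_\tau - d\,(\text{tail correction}) + f(V_\tau) \ge 0$ — more precisely $V_\tau$ is a supersolution of the half-line equation because it "sees" the extra mass $U$ on $[\,-\tau,0\,)$ where $U$ would be replaced by $0$ in the equation for $U$; since $V_\tau(0)=U(\tau)>0=U(0)$ and $V_\tau(\infty)=u^*\ge U(\infty)$, a maximum-principle comparison gives $U(x)\le U(x+\tau)$ for all $x\ge 0$, i.e. $U$ is nondecreasing, and the strong maximum principle upgrades this to strictly increasing.

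Finally, the limit $U(x)\to u^*$ as $x\to\infty$: since $U$ is increasing and bounded by $u^*$, the limit $\ell := \lim_{x\to\infty}U(x)\in(0,u^*]$ exists. Passing $x\to\infty$ in \eqref{1.2}, the tail $\int_x^\infty J(x-y)\dy\to 0$ and translation invariance of $J$ give, in the limit, $d\ell - d\ell + f(\ell)=0$, so $f(\ell)=0$ with $\ell>0$; by hypothesis {\bf (F)} the only positive zero of $f$ is $u^*$, hence $\ell=u^*$. Uniqueness of the bounded positive solution follows from the sliding method combined with the Fisher–KPP structure (the strict monotonicity of $f(u)/u$): given two bounded positive solutions $U_1,U_2$, a standard argument shows each converges to $u^*$ at infinity as above, then one slides $U_2$ against $U_1$, uses that the strict concavity-type condition rules out internal tangency, and concludes $U_1\equiv U_2$. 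I expect the sliding/comparison step for monotonicity and uniqueness to be the main obstacle, because the half-line (rather than whole-line) setting breaks exact translation invariance and one must carefully track that the truncation of $U$ to $0$ on $(-\infty,0)$ makes shifted functions into genuine supersolutions; this is where the detailed estimates will concentrate, and where I would lean on the stationary comparison principle underlying Lemma \ref{l2.1}.
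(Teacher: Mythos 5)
Your outline is correct in spirit, but note first that the paper does not actually prove this lemma: existence and uniqueness are quoted from \cite[Lemma 2.7]{LLW22} and the monotonicity from the proof of \cite[Theorem 2.7]{LLW2}, with all details omitted. Your proposal is therefore essentially a reconstruction of those cited arguments --- monotone limits of Dirichlet problems on $[0,L]$ under the constant supersolution $u^*$, an eigenvalue/threshold argument for nontriviality, the key sliding observation that $U(\cdot+\tau)$ is a supersolution of the half-line equation because it gains the mass of $U$ on $[-\tau,0)$, and the KPP structure $f(u)/u\searrow$ for uniqueness. That is the right route, and the supersolution computation for $U(\cdot+\tau)$ is the correct key point.

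Two steps in your sketch are off and would need repair. First, you write $V_\tau(0)=U(\tau)>0=U(0)$; but \eqref{1.2} carries no Dirichlet condition at $x=0$, and in fact $U(0)>0$ (the paper uses $U(0)>0$ explicitly in Theorem \ref{t1.2}). More importantly, the comparison between the subsolution $U$ and the supersolution $U(\cdot+\tau)$ cannot be run off endpoint ordering as in the local elliptic case: for the nonlocal stationary KPP equation the standard device is to set $k^*=\inf\{k\ge1:\,kU(\cdot+\tau)\ge U\ {\rm on}\ [0,\yy)\}$ and use the strict monotonicity of $f(u)/u$ to force $k^*=1$, and this requires a positive lower bound $\inf_{x\ge0}U(x+\tau)>0$, which must be extracted \emph{before} monotonicity is known (e.g.\ by showing $\liminf_{x\to\yy}U>0$ via compactly supported subsolutions); the order of your steps needs adjusting accordingly. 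Second, your strong-maximum-principle computation at a touching point $x_0$ with $U(x_0)=u^*$ only yields an immediate contradiction when $\int_{x_0}^{\yy}J(z)\,{\rm d}z>0$; if $J$ is compactly supported and $x_0$ is large this tail vanishes, and one instead concludes $U\equiv u^*$ on $(x_0+{\rm supp}\,J)\cap[0,\yy)$ and must propagate this identity down to a neighbourhood of $x=0$, where the boundary loss term finally gives the contradiction. Neither issue is fatal, but both sit exactly where you predicted the detailed estimates would concentrate.
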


\begin{proof}The existence and uniqueness directly follow from \cite[Lemma 2.7]{LLW22}, and the monotonicity can be derived by using the similar methods as in the proof of \cite[Theorem 2.7]{LLW2}. The details are omitted here.
\end{proof}

For later use, we now recall some results which can be seen from the proof of \cite[Theorem 3.4]{LLW22} and will be used to construct the suitable lower solution. Let
\begin{align*}
\xi(x)=\left\{\begin{aligned}
&1,& &|x|\le1,\\
&2-|x|,& &1\le|x|\le2,\\
&0, & &|x|\ge2,
\end{aligned}\right.
  \end{align*}
and define $J_n(x)=\xi(\frac{x}{n})J(x)$. Clearly, $J_n$ are supported compactly for all $n\ge1$ and non-decreasing in $n$, $J_n\le J$, and $J_n\to J$ in $L^1(\mathbb{R})\cap C_{\rm loc}(\mathbb{R})$ as $n\to\yy$.
Thus we can choose $n$ to be large enough, say $n\ge N>0$, such that $d(\|J_n\|_1-1)u+f(u)$ still satisfies the condition {\bf(F)}.

For any $n\ge N$, the semi-wave problem
\bes\label{2.1}\left\{\begin{array}{lll}
 d\dd\int_{-\yy}^{0}\!J_n(x-y)\phi_n(y)\dy-d\phi_n+c_n\phi_n'+ f(\phi_n)=0,\;\;-\yy<x<0,\\[2mm]
\phi_n(-\yy)=u^*_{n},\ \ \phi_n(0)=0, \ \ c_n=\mu\dd\int_{-\yy}^{0}\int_{0}^{\yy}\!J_n(x-y)\phi_n(x)\dy\dx
 \end{array}\right.
 \ees
has a unique solution pair $( c_{n,\mu},\phi_n)$ with $ c_{n,\mu}>0$ and $\phi_n$ strictly decreasing in $(-\yy,0]$, where $u^*_{n}$ is the unique positive root of equation
 \[d(\|J_n\|_1-1)u+f(u)=0.\]
Clearly, $u^*_{n}\le u^*_{n+1}\le u^*$ and $\lim_{n\to\yy}u_n^*=u^*$. Moreover, if {\bf (J1)} holds, then $\lim_{n\to\yy} c_{n,\mu}= c_\mu$ and $\lim_{n\to\yy}\phi_n=\phi^{c_\mu}$ locally uniformly in $(-\yy,0]$, where $(c_{\mu},\phi^{c_\mu})$ is the unique solution pair of semi-wave problem \eqref{1.3}.

On the other hand, the steady state problem
 \bes\label{2.2}
  d\dd\int_{0}^{\yy}J_n(x-y)U(y)\dy-dU+f(U)=0 {\rm ~ ~ ~ in ~ }\;[0,\yy)
  \ees
has a unique bounded positive solution $U_n\in C([0,\yy))$, which is strictly increasing in $[0,\yy)$, $0<U_n<u^*_n$, $\lim_{x\to\yy}U_n(x)=u^*_n$, and $U_n\to U$ in $C_{\rm loc}([0,\yy))$ as $n\to\yy$.  The next result shows that $U_n\to U$ in $L^{\yy}([0,\yy))$ as $n\to\yy$ which will be used later.

\begin{lemma}\label{l2.3}Let $U$ and $U_n$ be the unique bounded positive solutions of \eqref{1.2} and \eqref{2.2}, respectively. Then $U_n\to U$ in $L^{\yy}([0,\yy))$ as $n\to\yy$.
\end{lemma}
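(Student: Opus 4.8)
The plan is to upgrade the known locally uniform convergence $U_n\to U$ in $C_{\rm loc}([0,\yy))$ to uniform convergence on the whole half-line by controlling the behavior near $x=+\yy$, where both $U_n$ and $U$ approach their limits $u_n^*$ and $u^*$. Since we already know $u_n^*\uparrow u^*$, the tail of $U$ and $U_n$ are both close to constants that are close to each other; the task is to make this quantitative and uniform in $n$. Fix $\ep>0$. First I would use $U(x)\to u^*$ to pick $R>0$ with $u^*-U(x)<\ep$ for all $x\ge R$, and also shrink things so that $u^*-u_N^*<\ep$ (hence $u^*-u_n^*<\ep$ for all $n\ge N$ by monotonicity). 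On the compact set $[0,R]$, locally uniform convergence gives $n_0\ge N$ such that $|U_n(x)-U(x)|<\ep$ for all $x\in[0,R]$ and $n\ge n_0$. So everything reduces to estimating $|U_n(x)-U(x)|$ for $x>R$.

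For the tail estimate I would exploit the monotonicity of $U_n$ together with $0<U_n<u_n^*$: for $x>R$ we have $U_n(x)\ge U_n(R)$, and $U_n(R)\to U(R)>u^*-\ep-\ep$ (using the two bounds above and the convergence on $[0,R]$), so for $n\ge n_0$ we get $U_n(x)>U(R)-\ep>u^*-3\ep$ for all $x\ge R$; combined with $U_n(x)<u_n^*<u^*$ this pins $U_n(x)$ into an interval of length $O(\ep)$ around $u^*$. Since $U(x)$ for $x\ge R$ also lies in $(u^*-\ep,u^*)$, the triangle inequality yields $|U_n(x)-U(x)|<4\ep$ (or some fixed multiple of $\ep$) for all $x\ge R$ and $n\ge n_0$. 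Taking the supremum over $[0,R]$ and $[R,\yy)$ separately then gives $\|U_n-U\|_{L^\yy([0,\yy))}\le C\ep$ for $n\ge n_0$, which is the claim. A minor point to verify along the way is that $U_n(R)$ is indeed controlled from below in terms of $U(R)$ uniformly for large $n$, which follows immediately from the locally uniform convergence on $[0,R]$.

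The only real subtlety — and the step I would be most careful with — is making sure the lower bound on the tail of $U_n$ is genuinely uniform in $n$, i.e., that we are not implicitly using an $n$-dependent radius $R$. This is handled because $R$ is chosen once and for all from the single limiting profile $U$, and the monotonicity of $U_n$ transfers a single-point estimate $U_n(R)\approx U(R)$ to the whole ray $x\ge R$ without any further $n$-dependence; the upper bound $U_n<u_n^*<u^*$ is free. If one instead wanted to avoid invoking monotonicity of $U_n$, an alternative is a sliding/comparison argument at infinity, but the monotonicity route is cleanest and all the needed facts ($U_n$ increasing, $0<U_n<u_n^*$, $u_n^*\uparrow u^*$, $U_n\to U$ in $C_{\rm loc}$) are already recorded just before the statement. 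Hence I do not expect any serious obstacle; it is essentially a careful $\ep$-management argument splitting the half-line into $[0,R]$ and $[R,\yy)$.
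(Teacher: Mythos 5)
Your proposal is correct and follows essentially the same route as the paper: split $[0,\infty)$ into a compact piece handled by the $C_{\rm loc}$ convergence and a tail where both $U_n$ and $U$ are squeezed near $u^*$. The only (harmless) difference is in how the uniform-in-$n$ tail lower bound is obtained: you use the monotonicity of $U_n$ in $x$ together with convergence at the single point $R$, whereas the paper uses the monotonicity of $U_n$ in $n$ to reduce to a one-sided bound and then bounds $U_n\ge U_N$ on the tail of a single fixed profile; both ingredients are recorded just before the statement, so either works.
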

\begin{proof}
Note that $U_n$ is strictly increasing in $n\ge1$. It thus remains to show that for any small $\ep>0$, there exists a large $N>0$ such that
 \bess
  U_n(x)\ge U(x)-\ep ~ ~ {\rm for ~ }n\ge N, ~ x\ge0.\eess
In fact, due to $u^*_n\to u^*$ as $n\to\yy$, one can find a $N>1$ such that $u^*_n>u^*-\ep/2$ for $n\ge N$. On the other hand, since $U_N(x)\to u^*_N>u^*-\ep/2$ as $x\to\yy$, there is a $X\gg 1$ such that $U_N(x)\ge u^*-\ep$ for $x\ge X$. Since $U_n$ is nondecreasing in $n$, $U_n(x)\ge u^*-\ep> U(x)-\ep$ for $x\ge X$ and $n\ge N$.

Moreover, owing to the fact that $U_n\to U(x)$ in $C_{\rm loc}([0,\yy))$ as $n\to\yy$, we can choose $n$ to be sufficiently large, say $n\ge N_1>N$, satisfying $U_n(x)\ge U(x)-\ep$ for $n\ge N_1$ and $x\in[0,X]$. Thus we have $U_n(x)\ge U(x)-\ep$ for $x\ge0$ if $n\ge N_1$.
\end{proof}

Now we are going to construct a suitable lower solution, which is different from the one in \cite{LLW22}, to show the desired result.

\begin{lemma}\label{l2.4}Let $(u,h)$ be the unique solution of \eqref{1.1}. Suppose that spreading happens and {\bf (J1)} holds. Then
\bess
\lim_{t\to\yy}\frac{h(t)}{t}=c_\mu,
\eess
where $c_\mu$ is uniquely given by semi-wave problem \eqref{1.3}.
\end{lemma}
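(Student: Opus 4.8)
The plan is to establish the lower bound $\liminf_{t\to\yy}h(t)/t\ge c_\mu$, the matching upper bound $\limsup_{t\to\yy}h(t)/t\le c_\mu$ being part of the rough estimate of \cite{LLW22} recalled in the introduction. Since $c_{n,\mu}\to c_\mu$ as $n\to\yy$, it suffices to produce, for each fixed small $\eta\in(0,1)$ and each large $n$, a subsolution of \eqref{1.1} (in the sense of Lemma \ref{l2.1}) whose free boundary moves at a rate arbitrarily close to $c_{n,\mu}$, and then let $t\to\yy$, $\eta\to0$, $n\to\yy$ in that order.

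For the approximating kernel $J_n$ we already have two explicit objects: the steady state $U_n$ of \eqref{2.2} and the semi-wave $(c_{n,\mu},\phi_n)$ of \eqref{2.1}. The new lower solution should be built by putting a (slightly reduced) copy of $U_n$ on the back of the interval and a copy of the semi-wave $\phi_n$ near a moving front $\underline h(t)=L+\beta t$, with $\beta<c_{n,\mu}$ close to $c_{n,\mu}$ and $L$ large; the simplest candidate is
\[
\underline u(t,x)=\min\bigl\{(1-\eta)U_n(x),\ \phi_n(x-\underline h(t))\bigr\},\qquad 0\le x\le\underline h(t),
\]
which is continuous, equals $(1-\eta)U_n(x)$ near $x=0$, equals $\phi_n(x-\underline h(t))$ near $x=\underline h(t)$, and vanishes at $x=\underline h(t)$. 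The point of using the genuine steady state $U_n$ on the back — rather than a down-scaled wave profile as in \cite{LLW22} — is that near the free boundary $\underline u$ is exactly $\phi_n$, so the boundary inequality demanded by Lemma \ref{l2.1},
\[
\underline h'(t)<\mu\int_0^{\underline h(t)}\!\!\int_{\underline h(t)}^{\yy}J(x-y)\,\underline u(t,x)\,\dy\dx,
\]
follows immediately from $J\ge J_n$, the compact support of $J_n$ and the defining relation for $c_{n,\mu}$ in \eqref{2.1} provided only $\beta<c_{n,\mu}$; letting $\beta\uparrow c_{n,\mu}$ and $n\to\yy$ then gives the full speed $c_\mu$ instead of the reduced speed $\tfrac{U(0)}{u^*}c_\mu$.

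The comparison is run with Lemma \ref{l2.1}, taking $\bar u(t,x)=u(T_0+t,x)$, $\bar h(t)=h(T_0+t)$ and $s(t)\equiv r(t)\equiv0$ for a large $T_0$. Because spreading happens, $h(t)\to\yy$ and $u(t,\cdot)\to U$ locally uniformly; hence for $T_0$ large we get $L<h(T_0)$ and, using $U_n\le U$ and $U\ge U(0)>0$, the initial and boundary comparisons $\underline u(0,\cdot)\le u(T_0,\cdot)$ on $[0,L]$ and $\underline u(t,0)\le u(T_0+t,0)$. For the differential inequality one argues piecewise: on the part where $\underline u=(1-\eta)U_n(x)$ it comes from \eqref{2.2}, from $J\ge J_n$, and from the strict Fisher--KPP inequality $f((1-\eta)U_n)>(1-\eta)f(U_n)$; on the part where $\underline u=\phi_n(x-\underline h(t))$ it comes from \eqref{2.1}, from $J\ge J_n$ and from $\beta<c_{n,\mu}$. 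Granting the inequality throughout $(0,T]\times[0,\underline h(t))$, Lemma \ref{l2.1} gives $h(T_0+t)\ge L+\beta t$, hence $\liminf_{t\to\yy}h(t)/t\ge\beta$, and the proof is completed by the announced limits.

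The main obstacle is exactly the differential inequality in the transition layer around the crossing point $y_c(t)$ of $(1-\eta)U_n$ and $\phi_n(\cdot-\underline h(t))$: there $\underline u$ has a downward corner and the nonlocal term $d\int J(x-y)\underline u$ mixes the two pieces, producing an error term not accounted for by either \eqref{2.2} or \eqref{2.1}. I expect this is handled by choosing $L$ so large that the crossing always occurs in a region where $U_n$ is nearly flat (close to $u^*_n$), which makes the error small, and then absorbing it using the slack available from $J_n\lneq J$ and from the strict monotonicity of $f(u)/u$; the reduction factor $1-\eta$ and the choice of $\beta$ must be coordinated with $n$ and $L$ so that, after the error is absorbed, the front speed still converges to $c_{n,\mu}$ and the crossing level stays in a range where $\phi_n'$ does not degenerate. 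Making these quantitative estimates survive the triple limit is the delicate part.
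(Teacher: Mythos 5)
Your overall strategy (reduce to the lower bound, approximate by the compactly supported kernels $J_n$ and the semi-waves $(c_{n,\mu},\phi_n)$, compare via Lemma \ref{l2.1}, then let $\beta\uparrow c_{n,\mu}$ and $n\to\infty$) matches the paper, but your actual lower solution does not, and the difference is where the gap lies. You glue $(1-\eta)U_n$ to $\phi_n(\cdot-\underline h(t))$ by a minimum over the whole interval $[0,\underline h(t)]$ and must therefore verify the differential inequality across the transition layer. For the nonlocal operator the minimum of two subsolutions is \emph{not} a subsolution: at a point where $\underline u$ equals one piece, replacing the other piece by the smaller minimum strictly decreases the integral term, so the gluing error has the wrong sign throughout a $2n$-neighbourhood of the crossing point. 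You acknowledge this but only sketch how to absorb it, and the proposed reserves are not shown to dominate the error. In particular, on the semi-wave side of the crossing the only slack you have is $(c_{n,\mu}-\beta)|\phi_n'|$, and the crossing necessarily sits where $\phi_n$ is near its limit $u_n^*$, i.e.\ exactly where $\phi_n'$ degenerates; since you put no reduction factor on $\phi_n$, it is not clear that $\beta$ can be taken arbitrarily close to $c_{n,\mu}$ after the error is absorbed, which is the whole point of the lemma. As written, the central step of the proof is unproved.

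The idea you are missing is that Lemma \ref{l2.1} is formulated with a buffer zone $[s(t),r(t)]$ precisely so that no gluing is ever needed. The paper takes $\underline u(t,x)=(1-\ep)\phi_n(x-\underline h(t))$ with $\underline h(t)=(1-2\ep)c_{n,\mu}(t-T)+X_\ep+4n$, but only on $x\ge X_\ep$, where $X_\ep$ is chosen so large that $U_n>(1-\ep/2)u_n^*$ there; it then applies Lemma \ref{l2.1} with $(s(t),r(t))=(X_\ep,X_\ep+4n)$. On the buffer $[X_\ep,X_\ep+4n]$ one only has to check the pointwise ordering $\underline u\le u$, which follows from the locally uniform convergence $u(t,\cdot)\to U\ge U_n$ and the near-flatness of $U_n$ beyond $X_\ep$; the differential inequality is required only for $x\ge X_\ep+4n$, where, because ${\rm supp}\,J_n=[-2n,2n]$, the truncated integral $\int_{X_\ep}^{\underline h(t)}J_n(x-y)\phi_n(y-\underline h(t))\,{\rm d}y$ coincides with the full integral in \eqref{2.1} and the semi-wave equation applies verbatim (the factor $1-\ep$ being handled by $f((1-\ep)\phi)\ge(1-\ep)f(\phi)$, and the front condition by the last identity in \eqref{2.1}); see \eqref{2.4}. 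This removes the transition-layer problem entirely rather than trying to estimate it. If you want to keep your min-construction you would have to supply the quantitative gluing estimates yourself, and the degeneration of $\phi_n'$ at the crossing level makes it doubtful that the construction closes with the full speed $c_{n,\mu}$.
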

\begin{proof} By \cite[Theorem 3.4]{LLW22}, we know $\limsup_{t\to\yy}{h(t)}/{t}\le c_\mu$.
It thus remains to prove
\bess
\liminf_{t\to\yy}\frac{h(t)}{t}\ge c_\mu.
\eess

We will prove it by a comparison argument. For any given $n\ge N$, by the definition of $J_n$, we know ${\rm supp}J_n=[-2n,2n]$. For any small $\ep>0$, there exists a $X_{\ep}>0$ such that
\[U_n(x)>(1-{\ep}/{2})u^*_n, ~ ~ \forall\, x\ge X_{\ep}.\]
Since spreading happens for \eqref{1.1}, we have $u(t,x)\to U(x)\ge U_n(x)$ in $C_{\rm loc}([0,\yy))$ as $t\to\yy$. So there exists a $T>0$ such that
\bess
\left\{\begin{aligned}
&h(T)>X_{\ep}+4n,\\
&u(t,x)\ge U(x)-\frac{\ep u^*_n}{2}\ge U_n(x)-\frac{\ep u^*_n}{2}\ge(1-\ep)u^*_n, ~ ~ \forall\, t\ge T, ~ x\in[X_{\ep},X_{\ep}+4n].
\end{aligned}\right.
 \eess
Define
\[\underline{h}(t)=(1-2\ep)c_{n,\mu}(t-T)+X_{\ep}+4n ~ ~ {\rm and } ~ ~ \underline{u}(t,x)=(1-\ep)\phi_n(x-\underline{h}(t)),\]
where $( c_{n,\mu},\phi_n)$ is the unique solution pair of \eqref{2.1}.

Next we are going to show that $(\underline{u},\underline{h})$ satisfies
\bes\label{2.4}
\left\{\begin{aligned}
&\underline u_t\le d\dd\int_{X_{\ep}}^{\underline h(t)}J_n(x-y)\underline u(t,y)\dy-d\underline u+ f(\underline u), && t>T,~X_{\ep}
+4n\le x<\underline h(t),\\
&\underline u(t,x)\le u(t,x),~ \underline{u}(t,\underline{h}(t))\le0, && t\ge T, ~x\in [X_{\ep},X_{\ep}+4n],\\
&\underline h'(t)<\mu\dd\int_{X_{\ep}}^{\underline h(t)}\int_{\underline h(t)}^{\infty}
J_n(x-y)\underline u(t,x)\dy\dx,&& t>T,\\
&\underline h(T)< h(T).
\end{aligned}\right.
 \ees
On the other hand, in view of \eqref{1.1}, we have
 \bess
\left\{\begin{aligned}
&u_t>d\dd\int_{X_{\ep}}^{h(t)}J_n(x-y)u(t,y)\dy-du+f(u), && t>T,~X_{\ep}+4n\le x<h(t),\\
&u(t,x)\ge\underline u(t,x) ,~  u(t,h(t))=0, &&t>T,~ x\in[X_{\ep},X_{\ep}+4n],\\
&h'(t)\ge\mu\dd\int_{X_{\ep}}^{h(t)}\int_{h(t)}^{\infty}
J_n(x-y)u(t,x)\dy\dx,&& t>T,\\
&h(T)>\underline h(T).
\end{aligned}\right.
 \eess
 Hence once \eqref{2.4} is proved, it follows from Lemma \ref{l2.1} with $(s(t),r(t))=(X_{\ep},X_{\ep}+4n)$ that $\underline{h}(t)\le h(t)$ and $\underline{u}(t,x)\le u(t,x)$ for $t\ge0$ and $x\in[X_{\ep},\underline{h}(t)]$.
 Together with the definition of $\underline{h}$, it yields that
 \bess
 \liminf_{t\to\yy}\frac{h(t)}{t}\ge\lim_{t\to\yy}
 \frac{\underline{h}(t)}{t}=(1-\ep)c_{n,\mu}.
 \eess
 Due to the arbitrariness of $\ep$ and the fact that $c_{n,\mu}\to  c_\mu$ as $n\to\yy$, we obtain the result as wanted.

Now we show that \eqref{2.4} is valid. Recall that ${\rm supp}J_n=[-2n,2n]$ and $\phi'_n\le 0$. Straightforward computations give that for $(t,x)\in(T,\yy)\times[X_{\ep}+4n,\underline{h}(t))$,
 \bess
 \underline{u}_t&=&-(1-\ep)(1-2\ep)\phi'_n(x-\underline{h}(t)) c_{n,\mu}\\
 &\le&-(1-\ep)\phi'_n(x-\underline{h}(t))c_{n,\mu}\\
 &=&(1-\ep)\bigg(d\int_{-\yy}^{0}J_n(x-\underline{h}-y)\phi_n(y)\dy-d\phi_n+f(\phi_n)\bigg)\\
 &=&(1-\ep)\bigg(d\int_{X_{\ep}}^{\underline{h}}J_n(x-y)\phi_n(y-\underline{h})\dy-d\phi_n+f(\phi_n)\bigg)\\
 &\le&d\int_{X_{\ep}}^{\underline{h}}J_n(x-y)\underline{u}(t,y)\dy-d\underline{u}+f(\underline{u}).
 \eess
The inequality in the first line of \eqref{2.4} holds. Clearly, $\underline{u}(t,\underline{h}(t))=(1-\ep)\phi_n(0)=0$. By the choice of $T$, we see
 \[u(t,x)\ge U_n(x)-\frac{\ep u^*_n}{2}\ge u^*_n(1-\ep)> (1-\ep)\phi_n(x-\underline{h})=\underline{u}(t,x),~ ~ \forall\, t\ge T, ~ x\in[X_{\ep},X_{\ep}+4n].\]
The inequalities in the second line of \eqref{2.4} are valid.
 Moreover,
 \bess
\mu\dd\int_{X_{\ep}}^{\underline h(t)}\int_{\underline h(t)}^{\infty}
J_n(x-y)\underline u(t,x)\dy\dx
&=&\mu(1-\ep)\dd\int_{X_{\ep}}^{\underline h(t)}\int_{\underline h(t)}^{\infty}
J_n(x-y)\phi_n(x-\underline{h}(t))\dy\dx\\
&=&\mu(1-\ep)\dd\int_{X_{\ep}-\underline{h}(t)}^{0}\int_{0}^{\infty}
J_n(x-y)\phi_n(x)\dy\dx\\
&=&\mu(1-\ep)\dd\int_{-(1-2\ep)c_{n,\mu}(t-T)-4n}^{0}\int_{0}^{\infty}
J_n(x-y)\phi_n(x)\dy\dx\\[1mm]
&=&(1-\ep) c_{n,\mu}>(1-2\ep) c_{n,\mu}\\
&=&\underline{h}'(t).
 \eess
The inequality in the third line of \eqref{2.4} is proved. It is obvious that $\underline h(T)< h(T)$. Hence, \eqref{2.4} holds and the proof is finished.
\end{proof}

Then we prove the asymptotical behaviors of solution component $u$.

\begin{lemma}\label{l2.5} Let $(u,h)$ be the unique solution of \eqref{1.1}. If  spreading happens, then
\bess
 \left\{\begin{array}{lll}
 \dd\lim_{t\to\yy}\max_{x\in[0,ct]}|u(t,x)-U(x)|=0, ~ ~\forall\, c\in[0, c_\mu) ~ ~ {\rm if ~ {\bf (J1)} ~ holds},\\
  \dd\lim_{t\to\yy}\max_{x\in[0,ct]}|u(t,x)-U(x)|=0, ~ ~\forall\, c\ge0 ~ ~ {\rm if ~ {\bf (J1)} ~ is ~ violated}.
  \end{array}\right.
 \eess
\end{lemma}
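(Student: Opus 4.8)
The plan is to fix $\ep>0$ (small) and the given constant $c$, split $[0,ct]$ into an inner part $[0,X_\ep]$, where $X_\ep$ is the fixed constant produced in the proof of Lemma \ref{l2.4}, and an outer part $[X_\ep,ct]$, and to show that $|u(t,x)-U(x)|$ is $O(\ep)$ on each part for all large $t$. Both cases of the lemma will be treated simultaneously: since ${\rm supp}\,J_n$ is compact, $c_{n,\mu}$ is defined for every $n\ge N$, and all that is needed is that $n$ may be chosen with $c_{n,\mu}$ as large as we wish — namely $c_{n,\mu}\to c_\mu$ when {\bf (J1)} holds (so any fixed $c<c_\mu$ is admissible) and $c_{n,\mu}\to\yy$ when {\bf (J1)} is violated (see \cite{LLW22}), so any fixed $c\ge0$ is admissible. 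On the inner interval the claim is immediate: the spreading assumption gives $u(t,\cdot)\to U$ in $C_{\rm loc}([0,\yy))$, hence uniformly on the compact set $[0,X_\ep]$, so $\max_{x\in[0,X_\ep]}|u(t,x)-U(x)|\to0$.

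For the outer lower bound I would re-use verbatim the sub-solution $(\underline u,\underline h)$ built in the proof of Lemma \ref{l2.4}: having fixed a sufficiently small $\ep>0$, pick $n\ge N$ so large that $(1-2\ep)c_{n,\mu}>c$ and $u^*_n$ lies within $\ep$ of $u^*$; let $X_\ep$ satisfy $U_n(x)>(1-\ep/2)u^*_n$ for $x\ge X_\ep$, and take $T$, $\underline h(t)=(1-2\ep)c_{n,\mu}(t-T)+X_\ep+4n$ and $\underline u(t,x)=(1-\ep)\phi_n(x-\underline h(t))$ exactly as there, so that $u(t,x)\ge\underline u(t,x)$ for $t\ge T$ and $x\in[X_\ep,\underline h(t)]$. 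Because $(1-2\ep)c_{n,\mu}>c$ we have $\underline h(t)-ct\to+\yy$, hence $x-\underline h(t)\to-\yy$ uniformly for $x\in[X_\ep,ct]$; since $\phi_n$ is monotone with $\phi_n(-\yy)=u^*_n$, this forces $\underline u(t,x)\to(1-\ep)u^*_n$ uniformly on $[X_\ep,ct]$. Combining this with $U(x)\le u^*$ and $\|U-U_n\|_{L^\yy}\to0$ (Lemma \ref{l2.3}) yields $\liminf_{t\to\yy}\min_{x\in[X_\ep,ct]}\big(u(t,x)-U(x)\big)\ge-C\ep$ for a constant $C$ depending only on $u^*$.

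For the outer upper bound I would invoke the a priori bound $\limsup_{t\to\yy}\sup_{x\ge0}u(t,x)\le u^*$ (a standard comparison of $u$ with the spatially constant solution of $v'=f(v)$, or \cite{LLW22}), so that $u(t,x)\le u^*+\ep$ for all $x\ge0$ and all large $t$; together with $U(x)\ge U_n(x)>(1-\ep/2)u^*_n$ on $[X_\ep,ct]$ and $u^*_n$ close to $u^*$, this gives $u(t,x)\le U(x)+C\ep$ there. Collecting the inner estimate and the two outer estimates gives $\limsup_{t\to\yy}\max_{x\in[0,ct]}|u(t,x)-U(x)|\le C\ep$, and letting $\ep\to0$ finishes the proof. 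The one genuinely delicate point is the bookkeeping of constants on the outer region: one must choose $n$ large enough that $u^*_n$ and the values $U_n(x)$ for $x\ge X_\ep$ all sit within $O(\ep)$ of $u^*$ while keeping $(1-2\ep)c_{n,\mu}>c$, so that the sub-solution's asymptotic value $(1-\ep)u^*_n$ and the barrier $u^*+\ep$ both trap $U(x)$ to within $O(\ep)$; one should also note that it is the strict divergence $\underline h(t)-ct\to+\yy$, not merely $\underline h(t)>ct$, that makes the convergence $\phi_n(\cdot-\underline h(t))\to u^*_n$ uniform on $[X_\ep,ct]$.
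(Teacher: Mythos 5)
Your proposal is correct and follows essentially the same route as the paper: an inner/outer decomposition with locally uniform convergence on $[0,X_\ep]$, the ODE comparison $\hat u'=f(\hat u)$ for the uniform upper bound, the Lemma \ref{l2.4} sub-solution $(1-\ep)\phi_n(x-\underline h(t))$ together with $\underline h(t)-ct\to\yy$ and $\phi_n(-\yy)=u_n^*$ for the lower bound, and $c_{n,\mu}\to\yy$ when {\bf (J1)} fails. The only (harmless) cosmetic difference is that you bound $U$ above by $u^*$ directly in the outer lower estimate, whereas the paper routes the comparison through $U_n$ and Lemma \ref{l2.3}; both yield the same $O(\ep)$ bookkeeping.
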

\begin{proof}This proof will be divided into three steps.

{\bf Step 1.} In this step, we show
\bess
\limsup_{t\to\yy}u(t,x)\le U(x) ~ ~ {\rm uniformly ~ in ~ }[0,\yy).
\eess
Obviously, it is sufficient to prove that for any small $\ep>0$, there exists a $T>0$ such that $u(t,x)\le U(x)+\ep$ for $t\ge T$ and $x\ge0$.
Recall that $U(x)\nearrow u^*$ as $x\to\yy$. For any small $\ep>0$, there is a $X\gg 1$ such that
  \bes\label{2.6}
   U(x)+\ep\ge u^*+\frac{\ep}{2} ~ ~ {\rm for ~ }x\ge X.
   \ees
Consider the initial value problem of ODE
\[\hat u'=f(\hat{u}),\;\;t>0;\;\; ~ \hat{u}(0)=\|u_0\|_{C([0,h_0])}.\]
Clearly, $\hat{u}(t)\to u^*$ as $t\to\yy$. A comparison argument gives $u(t,x)\le \hat{u}(t)$ for $t\ge0$ and $x\ge0$, which implies $
\limsup_{t\to\yy}u(t,x)\le u^*$ uniformly in $[0,\yy)$.
Hence, there exists a $T_1>0$ such that
\[u(t,x)\le u^*+{\ep}/2 ~ ~ {\rm for ~ }t\ge T_1, ~ x\ge0.\]
Combining with \eqref{2.6}, we get
\[u(t,x)\le u^*+{\ep}/2\le U(x)+\ep ~ ~ {\rm for ~ }t\ge T_1, ~ x\ge X.\]
Recall that $u(t,x)\to U$ in $C_{\rm loc}([0,\yy))$. One can find a $T_2>T_1$ such that $u(t,x)\le  U(x)+\ep$ for $t\ge T_2$ and $x\in[0,X]$.
So $u(t,x)\le U(x)+\ep$ for $t\ge T_2$ and $x\in[0,\yy)$.
This step is ended.

{\bf Step 2.} In this step, we show that if {\bf (J1)} holds, then for all $c\in[0,c_\mu)$,
\bess
\liminf_{t\to\yy}u(t,x)\ge U(x) {\rm ~ ~uniformly ~ in ~ }[0,ct].
\eess
Certainly, it suffices to prove that for any small $\ep>0$ satisfying $\sqrt{\ep}(1+2u^*)<1$, there exists a $T_3>T_2$ such that
\[u(t,x)\ge U(x)-\sqrt\ep ~ ~ {\rm for ~ }t\ge T_3, ~ x\in[0,ct].\]

Recall the lower solution constructed in Lemma \ref{l2.4}:
\[\underline{h}(t)=(1-\ep)c_{n,\mu}(t-T)+X_{\ep}+4n,\; ~ ~ {\rm and } ~ ~ \underline{u}(t,x)=(1-\ep)\phi_n(x-\underline{h}(t)),\]
where $n$, $X_{\ep}$ and $T$ are chosen as in the proof of Lemma \ref{l2.4}.
For any $c\in[0, c_\mu)$, we shrink $\ep$ such that $c<(1-2\ep)c_\mu$. Meanwhile, by Lemma \ref{l2.3}, we can enlarge $n$, say $n\ge N_1>N$, such that $c<(1-2\ep) c_{n,\mu}$ and $U_n(x)\ge U(x)-\ep$ for $x\ge0$ and $n\ge N_1$. As in the proof of Lemma \ref{l2.4}, we can find a $T>T_3$ such that
\bes\label{2.8}
u(t,x)\ge \underline{u}(t,x) ~ ~ {\rm for ~ }t\ge T, ~ x\in[X_{\ep},\underline{h}(t)].\ees
Recalling $c<(1-2\ep)c_{n,\mu}=\underline{h}'(t)$, we may let $t$ be large enough, say $t\ge T_4>T$, such that $ct<\underline{h}(t)$ for $t\ge T_4$. Then simple calculations yield
\bess
\max_{x\in[X_{\ep},\,ct]}|\underline{u}(t,x)-(1-\ep)U_n(x)|
&\le& (1-\ep)[(u^*_n-\phi_n(ct-\underline{h}(t)))+(u^*_n-U_n(X_{\ep}))]\\
&\le&(1-\ep)\kk(u^*_n-\phi_n(ct-\underline{h}(t))+\frac{\ep u^*_n}{2}\rr),
\eess
which, combined with $\phi_n(ct-\underline{h}(t))\to u^*_n$ as $t\to\yy$, leads to that there is some $T_5>T_4$ such that
\[\max_{x\in[X_{\ep},\,ct]}|\underline{u}(t,x)-(1-\ep)U_n(x)|\le(1-\ep)\ep u^*_n ~ ~ {\rm for ~ }t\ge T_5.\]
Together with \eqref{2.8} and the choice of $\ep$, we have that, for $t\ge T_5$ and $x\in[X_{\ep},\,ct]$,
\bes\label{2.9}
u(t,x)\ge(1-\ep)(U_n(x)-\ep u^*_n)\ge(1-\ep)(U(x)-\ep-\ep u^*_n)\ge U(x)-\sqrt{\ep}.
\ees
Note that $u(t,x)\to U(x)$ in $C_{\rm loc}([0,\yy))$ as $t\to\yy$. Thus we can find a $T_6>T_5$ such that $u(t,x)\ge U(x)-\sqrt{\ep}$ for $t\ge T_6$ and $x\in[0,X_{\ep}]$, which, combined with \eqref{2.9}, completes this step.

{\bf Step 3.} In this step, we prove that if {\bf (J1)} is violated by $J$, then for all $c\ge0$,
\bess
\liminf_{t\to\yy}u(t,x)\ge U(x) {\rm ~ ~uniformly ~ in ~ }[0,ct].
\eess
In fact, by the proof of \cite[Theorem 3.4]{LLW22} we know that if {\bf (J1)} does not hold, then $c_{n,\mu}\to\yy$ as $n\to\yy$. So for any $c\ge0$, we can take $n$ to be sufficiently large such that $c<(1-2\ep)c_{n,\mu}$. Then the desired result follows directly from similar arguments with those in step 2. The proof is complete.
\end{proof}

Now we discuss the asymptotical behaviors of the level set $E_{\lambda}(t)$ for $u$ of \eqref{1.1}.

\begin{lemma}\label{l2.6}Assume that spreading happens. Let $(u,h)$ be the unique solution of \eqref{1.1}. Then the following statements hold.
\begin{enumerate}[$(1)$]
\item For any $\lambda\in[U(0),u^*)$, we have
\[\lim_{t\to\yy}\inf E_{\lambda}(t)=X_{\lambda},\]
where $X_{\lambda}$ is uniquely determined by $U(X_{\lambda})=\lambda$. This clearly implies
\[\lim_{t\to\yy}\frac{\inf E_{\lambda}(t)}{t}=0.\]
\item For any $\lambda\in(0,U(0))$, we have
\bess
 \left\{\begin{array}{lll}
 \dd\lim_{t\to\yy}\frac{\inf E_{\lambda}(t)}{t}=c_\mu ~ ~ {\rm if ~ {\bf (J1)} ~ holds},\\[3mm]
  \dd\lim_{t\to\yy}\frac{\inf E_{\lambda}(t)}{t}=\yy ~ ~ {\rm if ~ {\bf (J1)} ~ is ~ violated},
  \end{array}\right.
 \eess
where ${\ccc c_\mu}$ is uniquely determined by the semi-wave problem \eqref{1.3}.
\item If {\bf (J1)} holds, then
\[\lim_{t\to\yy}\frac{\sup E_{\lambda}(t)}{t}=c_\mu,\;\;\forall\;\lambda\in (0,u^*).\]
\item If {\bf (J1)} does not hold, then
\[\lim_{t\to\yy}\frac{\sup E_{\lambda}(t)}{t}=\yy,\;\;\forall\;\lambda\in (0,u^*).\]
\end{enumerate}
\end{lemma}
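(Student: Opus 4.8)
The plan is to derive Lemma~\ref{l2.6} from two facts already in hand -- the asymptotics of $u$ from Lemma~\ref{l2.5} and the rate of the free boundary from Lemma~\ref{l2.4} (with \cite[Theorem 3.4]{LLW22}) -- combined with the intermediate value theorem applied to the continuous function $x\mapsto u(t,x)$ on $[0,h(t)]$ and the strict monotonicity of $U$ (Lemma~\ref{l2.2}). The two inputs I will repeatedly invoke are: (a) $u(t,\cdot)\to U$ in $C_{\rm loc}([0,\yy))$, and more precisely $\max_{x\in[0,ct]}|u(t,x)-U(x)|\to0$ as $t\to\yy$ for every $c\in[0,c_\mu)$ when \textbf{(J1)} holds and for every $c\ge0$ when it fails; (b) $h(t)/t\to c_\mu$ under \textbf{(J1)} and $h(t)/t\to\yy$ otherwise, while $u(t,h(t))=0$ for all $t$. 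Since $U$ is continuous, strictly increasing, $0<U<u^*$ and $U(\yy)=u^*$, for each $\lambda\in[U(0),u^*)$ there is a unique $X_\lambda\ge0$ with $U(X_\lambda)=\lambda$, and $X_\lambda=0$ precisely when $\lambda=U(0)$. One preliminary remark is needed in every case: $E_\lambda(t)\neq\emptyset$ for all large $t$, because $u(t,\cdot)$ takes a value strictly above $\lambda$ at some point of $[0,h(t))$ -- at $x=0$ when $\lambda<U(0)$ since $u(t,0)\to U(0)$, and at a point $x\approx ct$ (any $c<c_\mu$, or any $c\ge0$ when \textbf{(J1)} fails) when $\lambda\ge U(0)$ since $u(t,ct)\to u^*>\lambda$ -- while $u(t,h(t))=0$; hence $\inf E_\lambda(t)$ and $\sup E_\lambda(t)$ are genuine real numbers with $0\le\inf E_\lambda(t)\le\sup E_\lambda(t)<h(t)$.

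For part~(2), let $\lambda\in(0,U(0))$. For the lower estimate I fix $c\in(0,c_\mu)$ when \textbf{(J1)} holds and take $c\ge0$ arbitrary otherwise; since $U\ge U(0)>\lambda$, input~(a) gives $u(t,x)\ge U(x)-\tfrac12(U(0)-\lambda)>\lambda$ on $[0,ct]$ for $t$ large, hence $E_\lambda(t)\cap[0,ct]=\emptyset$ and $\inf E_\lambda(t)>ct$. When \textbf{(J1)} fails this already yields $\inf E_\lambda(t)/t\to\yy$. When \textbf{(J1)} holds I also use the matching upper estimate: $u(t,0)>\lambda>0=u(t,h(t))$ for $t$ large, so the intermediate value theorem on $[0,h(t)]$ gives a point of $E_\lambda(t)$ of size at most $h(t)\le(c_\mu+\ep)t$; letting $c\uparrow c_\mu$ and $\ep\downarrow0$ gives $\inf E_\lambda(t)/t\to c_\mu$. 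Parts~(3) and~(4) are dual. The upper estimate is automatic, $\sup E_\lambda(t)<h(t)$, so $\limsup_{t\to\yy}\sup E_\lambda(t)/t\le c_\mu$ under \textbf{(J1)} and there is nothing to prove when $h(t)/t\to\yy$. For the lower estimate fix $c<c_\mu$ (resp.\ arbitrary $c\ge0$); since $U(ct)\to u^*>\lambda$, input~(a) gives $u(t,ct)\to u^*$, hence $u(t,ct)>\lambda$ for $t$ large, while $u(t,h(t))=0<\lambda$ and $h(t)>ct$ eventually, so the intermediate value theorem on $[ct,h(t)]$ produces a point of $E_\lambda(t)$ beyond $ct$, i.e.\ $\sup E_\lambda(t)>ct$; let $c\uparrow c_\mu$ (resp.\ $c\to\yy$).

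For part~(1) with $\lambda\in(U(0),u^*)$, so $X_\lambda>0$: by strict monotonicity, $\max_{[0,X_\lambda-\ep]}U=U(X_\lambda-\ep)<\lambda$, so $C_{\rm loc}$ convergence gives $u(t,\cdot)<\lambda$ on $[0,X_\lambda-\ep]$ for $t$ large, whence $\inf E_\lambda(t)\ge X_\lambda-\ep$; for the reverse, $u(t,0)\to U(0)<\lambda$ and $u(t,X_\lambda+\ep)\to U(X_\lambda+\ep)>\lambda$, so the intermediate value theorem on $[0,X_\lambda+\ep]$ gives $\inf E_\lambda(t)<X_\lambda+\ep$; since $\ep>0$ is arbitrary, $\inf E_\lambda(t)\to X_\lambda$. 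The endpoint $\lambda=U(0)$ (where $X_\lambda=0$) needs only $\limsup_{t\to\yy}\inf E_\lambda(t)\le0$, since $\inf E_\lambda(t)\ge0$ is trivial; I would obtain it by a limiting argument, letting $\lambda'\downarrow U(0)$ in the case just treated (using $X_{\lambda'}\to0$ and continuity of $U$) supplemented by a short direct argument exploiting that $U$ attains its minimum at $x=0$ and $u(t,\cdot)\to U$ uniformly near $x=0$.

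Almost all of the work has already been done in Lemma~\ref{l2.5} (which rests on the new lower solution of Lemma~\ref{l2.4}); once the uniform convergence $\max_{[0,ct]}|u(t,\cdot)-U|\to0$ in the relevant range of $c$ and the growth rate of $h(t)$ are available, every assertion of Lemma~\ref{l2.6} is an elementary consequence of the intermediate value theorem and the strict monotonicity of $U$, as sketched above. I expect the single genuinely delicate point to be the endpoint $\lambda=U(0)$ in part~(1): there $u(t,0)\to U(0)=\lambda$ and one cannot decide a priori from which side $u(t,0)$ approaches $\lambda$, so the clean intermediate value argument used for $\lambda>U(0)$ does not apply verbatim and must be supplemented by the limiting/approximation argument indicated above. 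A second, purely bookkeeping, matter is to check in each regime that $E_\lambda(t)$ is nonempty for all large $t$, so that $\inf E_\lambda(t)$ and $\sup E_\lambda(t)$ make sense -- this follows from the straddling values $u(t,h(t))=0$ and a value of $u(t,\cdot)$ exceeding $\lambda$ on $[0,h(t))$, as noted at the outset.
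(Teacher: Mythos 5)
Your proposal is correct and follows essentially the same route as the paper: part (1) via locally uniform convergence $u(t,\cdot)\to U$ plus strict monotonicity of $U$ and the intermediate value theorem, part (2) via the uniform convergence on $[0,ct]$ from Lemma~\ref{l2.5} for the lower bound and $u(t,h(t))=0$ with $h(t)/t\to c_\mu$ (or $\yy$) for the upper bound, and parts (3)--(4) via $u(t,ct)\to u^*>\lambda$ together with $\sup E_\lambda(t)<h(t)$. You are in fact somewhat more scrupulous than the paper about the non-emptiness of $E_\lambda(t)$ and about the endpoint $\lambda=U(0)$ (where $X_\lambda-\ep<0$ and the paper's two-sided IVT argument is applied only formally), so no changes are needed.
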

\begin{proof}(1) Notice that $U(x)$ is strictly increasing to $u^*$ in $[0,\yy)$ and $U(0)>0$. For any given $\lambda\in[U(0),u^*)$, there exists a unique $X_{\lambda}\ge0$ such that $U(X_{\lambda})=\lambda$. For any small $\ep>0$, we know $u(t,X_{\lambda}+\ep)\to U(X_{\lambda}+\ep)>\lambda$ and $u(t,X_{\lambda}-\ep)\to U(X_{\lambda}-\ep)<\lambda$ as $t\to\yy$. Thus $\inf E_{\lambda}(t)<X_{\lambda}+\ep$ for all large $t$. On the other hand, since $u(t,x)\to U(x)$ uniformly in $[0,X_{\lambda}-\ep]$ for any small $\ep>0$ as $t\to\yy$, it follows that $\inf E_{\lambda}(t)>X_{\lambda}-\ep$ for all large $t$. Hence $X_{\lambda}-\ep<\inf E_{\lambda}(t)<X_{\lambda}+\ep$ for all large $t$, which obviously leads to the conclusion (1).

(2) We only handle the case where {\bf (J1)} holds since the other case can be proved similarly. For any given $c\in[0,c_\mu)$, it follows from Lemma \ref{l2.5} that $u(t,x)\to U(x)$ uniformly in $[0,ct]$ as $t\to\yy$, which implies $u(t,x)>\lambda\in(0,U(0))$ in $[0,ct]$ for all large $t$.  In light of $u(t,h(t))=0$ and the continuity of $u$ on $x$, we have $\inf E_{\lambda}(t)\in[ct,h(t)]$ for all large $t$. Owing to the arbitrariness of $c\in[0,c_{\mu})$ and ${h(t)}/{t}\to  c_\mu$ as $t\to\yy$, we immediately derive the desired result in conclusion (2).

(3) Similar to the arguments in the proof of (2), we have that for any $c\in[0, c_\mu)$, $u(t,x)\to U(x)$ uniformly in $[0,ct]$ as $t\to\yy$, which implies $u(t,ct)\to u^*$ as $t\to\yy$. Due to $u(t,h(t))=0$ and ${h(t)}/{t}\to c_\mu$ as $t\to\yy$, we complete the proof of (3).

(4) This result can be proved by using the analogous arguments as in the proof of (3). The details are omitted. \end{proof}

As an application of Theorem \ref{t1.1}, we next investigate the asymptotical behaviors for the solution of the following nonlocal diffusion problem on half line
 \bes\left\{\begin{aligned}\label{2.10}
&w_t= d\int_{0}^{\yy}J(x-y)w(t,y)\dy-dw+f(w), && t>0,\ x\ge0,\\
&w(0,x)=w_0(x),  && x\ge0,
 \end{aligned}\right.\ees
 where $w_0\in C_{c}([0,\yy))$ and $w_0(x)\ge0,\not\equiv0$ in $[0,\yy)$. As we see from \cite[Lemma 2.8]{LLW22}, the unique solution of \eqref{2.10} converges to $U(x)$ in $C_{\rm loc}([0,\yy))$ as $t\to\yy$. Now we prove the accurate asymptotical behaviors for \eqref{2.10}. To this aim, we recall some known results, which can be seen from \cite{DLZ,Ya}, about the travelling wave solution of the corresponding Cauchy of \eqref{2.10}.

 If $J$ satisfies the so-called ``thin tailed'' condition
 \begin{enumerate}
\item[({\bf J2})] there exists $\lambda>0$ such that
\[\displaystyle\int_{-\infty}^{\infty}\!\!J(x){\rm e}^{\lambda x}{\rm d}x<\infty,\]
 \end{enumerate}
then there exists a $c_*>0$ such that problem
\begin{eqnarray}\label{2.11}\left\{\begin{array}{lll}
 d\displaystyle\int_{-\infty}^{\infty}\!\!J(x-y)\varphi(y){\rm d}y-d\varphi+c\varphi'+f(\varphi)=0, \quad x\in\mathbb{R},\\[1mm]
\varphi(-\infty)=u^*,\ \ \varphi(\infty)=0
 \end{array}\right.
\end{eqnarray}
 has a non-increasing solution $\varphi_c$ with speed $c$ if and only if $c\ge c_*$. Moreover, $\varphi_c\in C^1(\mathbb{R})$ for any $c\ge c_*$. If $J$ does not satisfy ${\bf(J2)}$, then \eqref{2.11} does not have such non-increasing solution.

 Besides, in view of \cite[Theorem 5.1 and Theorem 5.2]{DLZ} , we have $c_\mu\nearrow c_*$ as $\mu\to\yy$ if ({\bf J2}) holds, while $c_\mu\nearrow\yy$ as $\mu\to\yy$ if ({\bf J2}) does not hold.
 Then we have the following result concerning the asymptotical behaviors of the solution of \eqref{2.10}. Denote the level set of $w$ by $\tilde{E}_{\lambda}(t)=\{x\ge0: w(t,x)=\lambda\}$ for any $\lambda\in(0,u^*)$.

 \begin{theorem}\label{t2.1}Let $w(t,x)$ be the unique solution of \eqref{2.10}. Then the following statements are valid.
 \begin{enumerate}[$(1)$]
 \item If {\bf (J2)} holds, then
  \bes\label{2.12}
 \left\{\begin{array}{lll}
\dd \lim_{t\to\yy}\max_{x\in[0,ct]}|w(t,x)-U(x)|=0, ~ ~\forall\, c\in[0,c_*),\\[1mm]
\dd \lim_{t\to\yy}\sup_{x\in[ct,\yy)}w(t,x)=0, ~ ~\forall\, c>c_*.
  \end{array}\right.
 \ees
 Moreover,
  \bes\label{2.13}
 \left\{\begin{array}{lll}
\dd \lim_{t\to\yy}\inf \tilde{E}_{\lambda}(t)=X_{\lambda}, ~ ~\forall\, \lambda\in[U(0),u^*),\\[3mm]
\dd \lim_{t\to\yy}\frac{\inf \tilde{E}_{\lambda}(t)}{t}=c_*, ~ ~\forall\, \lambda\in(0,U(0)),\\[3mm]
\dd \lim_{t\to\yy}\frac{\sup \tilde{E}_{\lambda}(t)}{t}=c_*, ~ ~\forall\, \lambda\in(0,u^*).
  \end{array}\right.
 \ees
 \item If {\bf (J2)} is violated, then
 \bess
 \dd \lim_{t\to\yy}\max_{x\in[0,ct]}|w(t,x)-U(x)|=0, ~ ~\forall\, c\in[0,\yy).
 \eess
 Besides,
 \bess
 \left\{\begin{array}{lll}
\dd \lim_{t\to\yy}\inf \tilde{E}_{\lambda}(t)=X_{\lambda}, ~ ~\forall\, \lambda\in[U(0),u^*),\\[2mm]
\dd \lim_{t\to\yy}\frac{\inf \tilde{E}_{\lambda}(t)}{t}=\yy, ~ ~\forall\, \lambda\in(0,U(0)),\\[3mm]
\dd \lim_{t\to\yy}\frac{\sup \tilde{E}_{\lambda}(t)}{t}=\yy, ~ ~\forall\, \lambda\in(0,u^*).
  \end{array}\right.
 \eess
 \end{enumerate}
 \end{theorem}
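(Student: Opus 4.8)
The plan is to deduce Theorem \ref{t2.1} by sandwiching $w$ between two functions whose spreading behaviour is already understood: from below by the $u$-component of a free boundary problem \eqref{1.1} with large $\mu$, and from above by the solution of the nonlocal Fisher--KPP Cauchy problem on $\R$. Throughout, $U$ is the same steady state $U$ of \eqref{1.2} toward which both $w$ (by \cite[Lemma 2.8]{LLW22}) and each $u_\mu$ (by Theorem \ref{t1.1}) converge locally uniformly.

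\emph{Upper bound.} First I would repeat Step~1 of the proof of Lemma \ref{l2.5}: comparing $w$ with the solution of $\hat u'=f(\hat u)$, $\hat u(0)=\|w_0\|_{\yy}$, gives $\limsup_{t\to\yy}w(t,x)\le u^*$ uniformly in $[0,\yy)$, which together with $U(x)\nearrow u^*$ and the $C_{\rm loc}$-convergence $w(t,\cdot)\to U$ yields $\limsup_{t\to\yy}\sup_{x\ge0}\big(w(t,x)-U(x)\big)\le0$. Next, the extension $\hat w$ of $w$ by $0$ to $x<0$ is a subsolution of the Cauchy problem $\hat w_t=d\int_{\R}J(x-y)\hat w\dy-d\hat w+f(\hat w)$ on $\R$: it satisfies the equation with equality for $x\ge0$ (the integral over $(-\yy,0)$ vanishes there, since $\hat w\equiv0$ on $(-\yy,0)$) and with inequality for $x<0$. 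Hence $w(t,x)\le W(t,x)$ for $x\ge0$, where $W$ is that Cauchy solution with datum $w_0$ extended by $0$. If {\bf (J2)} holds, the known exact spreading speed $c_*$ of this Cauchy problem (see \cite{DLZ,Ya}) gives $\lim_{t\to\yy}\sup_{x\ge ct}W(t,x)=0$ for every $c>c_*$, which is the second line of \eqref{2.12}.

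\emph{Lower bound.} Here I would use that the $u$-component of \eqref{1.1} is a subsolution of \eqref{2.10} after zero-extension past its free boundary (the discarded nonnegative integral $\int_{h(t)}^{\yy}J(x-y)u\dy$ vanishes because $u\equiv0$ there). Since $w(t,\cdot)\to U$ in $C_{\rm loc}$, fix $t_0$ large so that $w(t_0,\cdot)$ is bounded below on $[0,h_0]$ (for some fixed $h_0>0$) by a continuous function $u_0$ with $u_0(h_0)=0<u_0$ on $[0,h_0)$, and take $\mu$ so large that spreading happens for the resulting solution $(u_\mu,h_\mu)$ of \eqref{1.1} started at $t=t_0$, using the sharp criteria of \cite{LLW22}. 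The comparison principle gives $u_\mu(t_0+s,x)\le w(t_0+s,x)$, while Theorem \ref{t1.1} gives $\max_{x\in[0,cs]}|u_\mu(t_0+s,x)-U(x)|\to0$ for every $c\in[0,c_\mu)$; hence $\liminf_{t\to\yy}\inf_{x\in[0,ct]}(w(t,x)-U(x))\ge0$ for such $c$. Letting $\mu\to\yy$ and invoking $c_\mu\nearrow c_*$ when {\bf (J2)} holds (resp. $c_\mu\nearrow\yy$ when it fails) extends this to all $c<c_*$ (resp. all $c\ge0$). Combined with the upper bound, this proves the first line of \eqref{2.12} and the convergence statement in part~(2).

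\emph{Level sets.} These follow by the same elementary bookkeeping as in the proof of Lemma \ref{l2.6}. For $\lambda\in[U(0),u^*)$, the strict monotonicity of $U$ together with the $C_{\rm loc}$-convergence pins $\inf\tilde E_\lambda(t)$ down to $X_\lambda$. For $\lambda\in(0,U(0))$, the uniform convergence $w\to U$ on $[0,ct]$ forces $w(t,\cdot)>\lambda$ there for large $t$, so $\inf\tilde E_\lambda(t)\ge ct$; the front estimate --- $\sup_{x\ge c't}w\to0$ for $c'>c_*$ under {\bf (J2)}, and $w(t,x)\to0$ as $x\to\yy$ for fixed $t$ in general --- makes $\tilde E_\lambda(t)$ nonempty and bounds $\sup\tilde E_\lambda(t)$ above by $c't$; letting $c\nearrow c_*$, $c'\searrow c_*$ (resp. $c\to\yy$) gives the stated limits for $\inf\tilde E_\lambda(t)/t$, and $\sup\tilde E_\lambda(t)$ is treated identically using $w(t,ct)\to u^*$. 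The main obstacle is the upper spreading bound: it rests on the comparison with the Cauchy problem on $\R$ and on the known precise spreading speed $c_*$ of the nonlocal Fisher--KPP Cauchy problem under {\bf (J2)} (and, when {\bf (J2)} fails, on the fact that $w(t,\cdot)$ decays at infinity for each fixed $t$); granting those facts, the remainder is a routine adaptation of the arguments of Section~2.
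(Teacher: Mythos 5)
Your proposal follows essentially the same route as the paper: the lower bound comes from dominating $w$ from below by the $u$-component of the free boundary problem \eqref{1.1} with large $\mu$, then invoking Theorem \ref{t1.1} (Lemma \ref{l2.5}) together with $c_\mu\nearrow c_*$ under {\bf (J2)} (resp.\ $c_\mu\nearrow\infty$ otherwise), while the uniform upper bound $\limsup_{t\to\infty}w\le U$ is obtained exactly as in Step 1 of Lemma \ref{l2.5}. The only cosmetic difference is in the front estimate: you first extend $w$ by zero to $x<0$ to make it a subsolution of the Cauchy problem on $\R$ and then quote the known spreading speed $c_*$ of that problem, whereas the paper compares $w$ directly on the half-line with the supersolution $K\varphi_{c_*}(x-c_*t)$ built from the traveling wave of \eqref{2.11}. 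Both rest on the same mechanism and either is acceptable.

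One step you explicitly defer should be filled in: the nonemptiness of $\tilde{E}_{\lambda}(t)$ for large $t$, which in case (2) (and for the paper's uniform treatment of both cases) requires knowing that $\liminf_{x\to\infty}w(t,x)=0$ for each fixed $t$. This is not immediate from the convergence $w(t,\cdot)\to U$ or from the front estimate when {\bf (J2)} fails. The paper supplies it by comparing $w$ with $\bar w={\rm e}^{f'(0)t}\tilde w$, where $\tilde w$ solves the linear problem $\tilde w_t=d\int_0^{\infty}J(x-y)\tilde w(t,y)\,{\rm d}y-d\tilde w$ with the same compactly supported datum; the Cauchy--Lipschitz--Picard theorem gives $\tilde w(t,\cdot)\in L^1([0,\infty))$, hence $w(t,\cdot)\in L^1([0,\infty))$ and $\liminf_{x\to\infty}w(t,x)=0$, which together with $w(t,\cdot)\to U$ in $C_{\rm loc}$ and the strict monotonicity of $U$ yields nonemptiness. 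With that argument (or an equivalent one) supplied, the remainder of your proof is sound.
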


 \begin{proof}
 (1) Notice $w(t,x)\to U(x)$ in $C_{\rm loc}([0,\yy))$ as $t\to\yy$ and $U(x)$ is strictly increasing to $u^*$. Besides, it follows from a comparison argument that  $\limsup_{t\to\yy}w(t,x)\le u^*$ uniformly in $[0,\yy)$. Thus arguing as in the proof of Lemma \ref{l2.5}, we obtain
 \bes\label{2.14}
 \limsup_{t\to\yy}w(t,x)\le U(x) ~ ~ {\rm uniformly ~ in ~ }[0,\yy).
 \ees
By the maximum principle, $w(t,x)>0$ for $t>0$ and $x\ge0$. Then we can choose the initial function $u_0$ in \eqref{1.1} to be suitably small such that $u_0(x)\le w(1,x)$ for $x\in[0,h_0]$. The comparison principle yields $w(t+1,x)\ge u(t,x)$ for $t\ge0$ and $x\in[0,h(t)]$, where $(u,h)$ is the unique solution of \eqref{1.1} with $u_0$ chosen as above.

 For any $c\in[0,c_*)$, we can take $\mu$ to be sufficiently large such that spreading happens for \eqref{1.1} and $c_\mu>c$ since $c_\mu\nearrow c_*$ as $\mu\to\yy$ if {\bf (J2)} holds. Note that {\bf (J2)} implies {\bf (J1)}. By Lemma \ref{l2.5}, we know
 \[\dd\lim_{t\to\yy}\max_{x\in[0,ct]}|u(t,x)-U(x)|=0, ~ ~\forall\, c\in[0,c_\mu),\]
 which, combined with $w(t,x)\ge u(t,x)$ for $t\ge0$ and $x\in[0,h(t)]$, leads to
 \[\liminf_{t\to\yy}w(t,x)\ge U(x) ~ ~ {\rm uniformly ~ in }~[0,ct].\]
 Noticing  \eqref{2.14}, we immediately obtain the first limit of \eqref{2.12}.

Let $\hat{w}(t,x)=K\varphi_{c_*}(x-c_*t)$ where $K\gg1$ and $(c_*,\varphi_{c_*})$ is the solution pair of travelling wave problem \eqref{2.11}. Clearly, we can choose $K\gg1$ such that $\hat{w}(0,x)=K\varphi_{c_*}(x)\ge w_0(x)$ for $x\ge0$. Then it follows from a comparison method that $w(t,x)\le \hat{w}(t,x)$ for $t\ge0$ and $x\ge0$. Thanks $c>c_*$, the second limit of \eqref{2.12} is obtained.

Before proving the results about the asymptotical behaviors of the level set, we first show that for any $\lambda\in(0,u^*)$, $\tilde{E}_{\lambda}(t)$ is non-empty when $t$ is large enough. Consider the problem
  \bess\left\{\begin{aligned}
&\tilde w_t= d\int_{0}^{\yy}J(x-y)\tilde w(t,y)\dy-d\tilde w, && t>0,\ x\ge0,\\
&\tilde w(0,x)=w_0(x),  && x\ge0.
 \end{aligned}\right.\eess
In view of the Cauchy-Lipschitz-Picard Theorem (see \cite[Theorem 7.3]{Bre}, pp184), the above problem has a unique solution $\tilde w$ with $\tilde w(t,\cdot)\in L^1([0,\yy))$ for all $t\ge0$. Let $\bar{w}(t,x)={\rm e}^{f'(0)t}\tilde{w}$. Clearly, using a comparison argument one can deduce  $\bar{w}(t,x)\ge w(t,x)$ for $t\ge0$ and $x\ge0$, which implies $w(t,\cdot)\in L^1([0,\yy))$ for all $t\ge0$. Moreover, by the maximum principle, $w(t,x)>0$ for $t>0$ and $x\ge0$. Thus $\liminf_{x\to\yy}w(t,x)=0$ for all $t\ge0$. Together with the fact that $w(t,x)\to U(x)$ in $C_{\rm loc}([0,\yy))$ as $t\to\yy$ and $U(x)$ is strictly increasing to $u^*$, it follows that $\tilde{E}_{\lambda}(t)$ is non-empty when $t$ is large enough.

The first limit of \eqref{2.13} can be proved by using the similar arguments as in the proof of Lemma \ref{l2.6}. The last two limits of \eqref{2.13} directly follows from \eqref{2.12}. The conclusion (1) is proved.

(2) Note that $c_\mu\nearrow\yy$ as $\mu\to\yy$ if {\bf (J2)} does not hold. Then using the analogous methods as above, we can prove (2). The details are omitted.

Therefore, the proof is complete.
  \end{proof}

  Theorem \ref{t1.1} follows from Lemmas \ref{l2.4} and \ref{l2.5}; Theorem \ref{t1.2} follows from Lemma \ref{l2.6}.

\section{Accelerated spreading}
This section involves the rate of accelerated spreading when kernel $J$ satisfies ${\bf(J^\gamma)}$ with $\gamma\in(1,2]$. That is, we will prove Theorem \ref{t1.2}, which will be done by several lemmas.

 \begin{lemma}\label{l3.1}Let $(u,h)$ be the unique solution of \eqref{1.1} and ${\bf(J^\gamma)}$ hold with $\gamma\in(1,2]$. Then there exists $C>0$ such that when $t\gg1$,
  \bess\left\{\begin{aligned}
  &h(t)\le Ct^{\frac{1}{\gamma-1}} ~ ~ {\rm if ~ }\gamma\in(1,2),\\
  &h(t)\le Ct\ln t ~ ~ {\rm if ~ }\gamma=2.
  \end{aligned}\right.\eess
 \end{lemma}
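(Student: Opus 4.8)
The plan is to obtain the upper bound on $h(t)$ by constructing an explicit supersolution pair $(\bar u,\bar h)$ and invoking the comparison principle (Lemma~\ref{l2.1}). Since $0<u<u^*$ always holds (by the spreading–vanishing analysis of \cite{LLW22} together with Step~1 of Lemma~\ref{l2.5}), we may bound $u$ in the free-boundary condition by the constant $u^*$ up to a fixed factor; thus the essential task reduces to controlling the double integral $\int_{0}^{\bar h(t)}\int_{\bar h(t)}^{\infty}J(x-y)\,\dy\,\dx$ when $\bar h(t)$ is large. Using ${\bf(J^\gamma)}$, for $z\gg 1$ one has the tail estimate $\int_{z}^{\infty}J(s)\,\dd s\approx z^{1-\gamma}$, and hence $\int_{0}^{R}\int_{R}^{\infty}J(x-y)\,\dy\,\dx=\int_0^R\left(\int_{R-x}^{\infty}J(s)\,\dd s\right)\dx\lesssim \int_0^R (R-x+1)^{1-\gamma}\,\dx$, which behaves like a constant when $\gamma\in(1,2)$ (the integral $\int_0^\infty (s+1)^{1-\gamma}\dd s$ converges) and like $\ln R$ when $\gamma=2$. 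So the effective ODE for a candidate supersolution boundary is $\bar h'(t)\le C_0$ if $\gamma\in(1,2)$ — but that would already give $\bar h(t)\lesssim t$, which is too strong and in fact false, so I must be more careful: the point is that $u$ is \emph{not} close to $u^*$ near $x=\bar h(t)$, it decays. The correct route is therefore to feed in a spatial profile for $\bar u$ that captures this decay.

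Concretely, I would look for $\bar u(t,x)=\min\{u^*,\,V(x)\}$ (time-independent in the bulk, or with a mild time modulation) where $V$ is an explicit decreasing profile chosen so that $d\int_0^\infty J(x-y)V(y)\dy-dV+f(V)\le 0$ for $x$ large — using $f(V)\le f'(0)V$ and the Fisher–KPP structure, this is a linear nonlocal inequality that an algebraically decaying profile $V(x)\approx x^{-\beta}$ or $V(x)\approx x^{1-\gamma}$ can be made to satisfy for a suitable exponent, exactly as in the Du–Ni construction \cite{DN1,DN2}. Then in the free-boundary condition, $\int_{0}^{\bar h}\int_{\bar h}^{\infty}J(x-y)\bar u(t,x)\dy\,\dx$ picks up the product of the kernel tail and the \emph{small} value of $\bar u$ near $x=\bar h$; balancing $\bar h'(t)\approx \mu\,(\text{tail})\times(\text{profile at }\bar h)$ produces an ODE of the form $\bar h'(t)\approx \bar h(t)^{-q}$ for an appropriate $q>0$ when $\gamma\in(1,2)$, whose solution is $\bar h(t)\approx t^{1/(q+1)}$; matching the exponent forces the answer $t^{1/(\gamma-1)}$, and the $\gamma=2$ borderline yields the logarithmic correction $t\ln t$. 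I would pin down the profile exponent by this consistency requirement and then verify the three differential inequalities of Lemma~\ref{l2.1} directly.

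The main obstacle I anticipate is the verification that the chosen algebraic profile $V$ is a genuine (super)solution of the nonlocal steady-state inequality near infinity: the nonlocal term $\int_0^\infty J(x-y)V(y)\dy$ must be estimated against $dV-f(V)$, and for $\gamma\in(1,2]$ the slowly decaying kernel makes this delicate — one must split the integral into regions $|x-y|\le x/2$, $y$ close to $0$, and $y$ large, and use ${\bf(J^\gamma)}$ on each piece, keeping track of the constant $d(\|J_n\|_1-1)$-type shifts so that the KPP condition is preserved. A secondary technical point is ensuring the supersolution dominates the initial data $u_0$ on $[0,h_0]$ and that $\bar h(0)>h(0)=h_0$, which is arranged by shifting $V$ and starting $\bar h$ slightly to the right; and one must handle the matching of $\bar u$ with $u$ on the overlap interval demanded by the comparison lemma, which is immediate since $u\le u^*$ and $\bar u=u^*$ there. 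Once these are in place, comparison gives $h(t)\le \bar h(t)$ and the stated bounds follow.
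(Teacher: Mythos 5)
Your proposal goes off the rails at the very first computation, and everything after that is built on the resulting error. You write that $\int_0^R\bigl(\int_{R-x}^{\infty}J(s)\,{\rm d}s\bigr)\,{\rm d}x\lesssim\int_0^R(R-x+1)^{1-\gamma}\,{\rm d}x$ ``behaves like a constant when $\gamma\in(1,2)$'' because $\int_0^\infty(s+1)^{1-\gamma}\,{\rm d}s$ converges. It does not: that integral converges if and only if $\gamma>2$. For $\gamma\in(1,2)$ one has $\int_0^R(s+1)^{1-\gamma}\,{\rm d}s\approx R^{2-\gamma}$, and for $\gamma=2$ it is $\approx\ln R$. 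With the correct evaluation, the ``naive'' route you abandoned is precisely the proof: since $u\le 2u^*$ on $[0,h(t)]$ for all large $t$ (by comparison with the ODE $\hat u'=f(\hat u)$), the free boundary equation gives $h'(t)\le 2\mu u^*\int_0^{h}\int_{h}^{\infty}J(x-y)\,{\rm d}y\,{\rm d}x\le C_0\,h(t)^{2-\gamma}$ (resp. $C_0\ln h(t)$ when $\gamma=2$), and integrating this differential inequality yields $h(t)\le Ct^{1/(\gamma-1)}$ (resp. $Ct\ln t$). No supersolution pair, no spatial profile for $\bar u$, and no appeal to Lemma~\ref{l2.1} is needed; this is exactly the paper's argument.

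The alternative you then develop is not only unnecessary but internally inconsistent. You propose that feeding a decaying profile into the boundary condition produces $\bar h'(t)\approx\bar h(t)^{-q}$ with $q>0$, whose solutions grow like $t^{1/(q+1)}$ --- but that exponent is less than $1$, i.e.\ sublinear growth, whereas the target $t^{1/(\gamma-1)}$ is superlinear for $\gamma\in(1,2)$ (this regime is accelerated spreading, $h(t)/t\to\infty$). Matching exponents forces $q=\gamma-2<0$, which is nothing other than the relation $h'\approx h^{2-\gamma}$ that the direct tail computation already delivers with $\bar u$ replaced by the constant $2u^*$. The Du--Ni-style decaying-profile constructions you invoke are the mechanism for sharp \emph{lower} bounds (and are used in that spirit in Lemmas~\ref{l3.3} and \ref{l3.4}), not for this upper bound. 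As written, your proof cannot be completed along the proposed lines; correcting the tail integral collapses it back to the short direct argument.
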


 \begin{proof}If $\gamma\in(1,2)$, we have
 \bess
 \int_0^{h}\!\int_{h}^{\infty}\!J(x-y){\rm d}y{\rm d}x&=&\int_0^{1}J(y)y{\rm d}y+\int_{1}^{h}J(y)y{\rm d}y+h\int_{h}^{\infty}\!\!J(y){\rm d}y\approx h^{2-\gamma},
 \eess
and if $\gamma=2$,
\bess
\int_0^{h}\!\int_{h}^{\infty}\!J(x-y){\rm d}y{\rm d}x&=&\int_0^{1}J(y)y{\rm d}y+\int_{1}^{h}J(y)y{\rm d}y+h\int_{h}^{\infty}\!\!J(y){\rm d}y\approx\ln h.
 \eess
Moreover, in view of a comparison argument, there is a $T>0$ such that $u(t,x)\le2u^*$ for $t\ge T$ and $x\in[0,h(t)]$. Hence, for $t\ge T$,
\[h'(t)\le2\mu u^*\int_0^{h(t)}\!\!\!\int_{h(t)}^{\infty}\!J(x-y){\rm d}y{\rm d}x,\]
which immediately completes the proof.
 \end{proof}

The following lemma is crucial for the construction of the desired lower solution, and can be proved by the analogous methods with \cite[Lemma 5.4]{DN1}.

\begin{lemma}\label{l3.2}Assume that positive constants $k_1,k_2$ and $l$ satisfy $k_2>k_1>l$. Define
 \[\psi(x)=\min\kk\{1,\;\frac{k_2-x}{k_1}\rr\}.\]
Then for any given $\ep>0$, there exists a $k_0>l$, depending only on $l, \ep$ and $J$, such that when $k_1>k_0$ and $k_2-k_1>2k_0$, we have
 \[\int_{l}^{k_2}J(x-y)\psi(y)\dy\ge (1-\ep)\psi(x) ~ ~ {\rm for ~ }x\in[k_0,k_2].\]
 \end{lemma}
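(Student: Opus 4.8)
The plan is to estimate the integral $\int_{l}^{k_2} J(x-y)\psi(y)\,\dy$ from below by splitting the analysis into two cases according to whether $x$ sits in the ``flat'' part of $\psi$ (where $\psi(x)=1$, i.e. $x\le k_2-k_1$) or in the ``linear'' part (where $\psi(x)=(k_2-x)/k_1$, i.e. $k_2-k_1\le x\le k_2$). In the first case I would simply observe that on the interval $[k_0,k_2-k_1]$ one has $\psi(y)\ge \psi(x)=1$ for $y$ in a neighbourhood of $x$ of fixed radius, so that
\[
\int_{l}^{k_2}J(x-y)\psi(y)\,\dy \;\ge\; \int_{x-R}^{x+R} J(x-y)\,\dy \;\cdot\; (\text{min of }\psi\text{ there}),
\]
and choosing $R$ large enough (depending only on $\ep$ and $J$, via $\int_{-R}^{R}J\ge 1-\ep/2$) together with $k_0\ge R$ makes the left tail $[l,x-R]$ irrelevant; one then checks the error from $\psi$ dropping below $1$ on $[x,x+R]$ is absorbed provided $k_1$ (hence the slope's gentleness $1/k_1$) is large, which is exactly what the hypothesis $k_1>k_0$ buys.

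In the second case, $x\in[k_2-k_1,k_2]$ and $\psi$ is affine with slope $-1/k_1$ near $x$. Here the key computation is that, because $J$ is even,
\[
\int_{x-R}^{x+R} J(x-y)\,\psi(y)\,\dy \;=\; \psi(x)\int_{-R}^{R}J(z)\,\dz \;-\;\frac{1}{k_1}\int_{-R}^{R} z\,J(z)\,\dz \;=\; \psi(x)\int_{-R}^{R}J(z)\,\dz,
\]
the first-order term vanishing by oddness, so the affine profile is essentially reproduced up to the tail mass $\int_{-R}^R J$. The boundary effects — the truncation at $y=k_2$ on the right (where $\psi$ would want to go negative but is cut to the formula, actually $\psi\ge 0$ there so truncating the domain at $k_2$ only removes a nonpositive contribution) and the truncation at $y=l$ on the left — must be controlled: the right truncation is harmless since we only integrate $\psi\ge0$, and for the left truncation one uses $x\ge k_0$ together with $k_2-k_1>2k_0$ to ensure the relevant window $[x-R,x+R]$ stays inside $[l,k_2]$ once $k_0\ge R$. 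Combining both cases, one picks $k_0:=\max\{l, R\}$ with $R=R(\ep,J)$ so that $\int_{|z|\le R}J\ge 1-\ep$, and the inequality $\int_l^{k_2}J(x-y)\psi(y)\,\dy\ge(1-\ep)\psi(x)$ follows on $[k_0,k_2]$.

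The main obstacle I expect is the bookkeeping at the ``corner'' $x=k_2-k_1$ of $\psi$ and, more delicately, making sure a \emph{single} choice of $k_0$ works uniformly for all admissible $k_1,k_2$: the cancellation of the linear term is clean only when the symmetric window $[x-R,x+R]$ lies entirely in the linear regime, and near the corner one loses this symmetry, so there one should instead use the crude bound $\psi(y)\ge\psi(x)-|x-y|/k_1$ valid for all $y$ and absorb the $\frac{1}{k_1}\int |z|J(z)\,\dz$ error into $\ep\,\psi(x)$ by taking $k_1>k_0$ large (this is legitimate since on the linear part $\psi(x)\ge (k_2-x)/k_1$ can be small, so one must be careful — but near the corner $\psi(x)$ is bounded below by a constant, $\psi(x)\ge 1/2$ say once we are within $k_1/2$ of the corner, and far into the linear part the relative error $\tfrac{1}{k_1\psi(x)}\int|z|J\,\dz = \tfrac{1}{k_2-x}\int|z|J\,\dz$ is small because $k_2-x$ is large there). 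This is precisely the device used in \cite[Lemma 5.4]{DN1}, and I would follow that argument, adapting the left endpoint from $-\infty$ to $l$ using the finite-mass condition in {\bf (J)} to control the discarded tail $\int_{-\infty}^{l-x}J$.
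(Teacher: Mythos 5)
Your proposal is correct and follows essentially the same route as the paper's proof: restrict to a symmetric window carrying mass at least $1-\ep/2$, use the flat part of $\psi$ directly, exploit the evenness of $J$ to reproduce the affine profile exactly on the linear part, control the corner via the Lipschitz bound with error of order $l_0/k_1$ absorbed using $k_1>k_0$, and handle the right endpoint by noting the affine extension of $\psi$ past $k_2$ is nonpositive. The only adjustment needed is that $k_0=\max\{l,R\}$ should be, say, $k_0\ge l+R$ (the paper takes $k_0>2\max\{l_0,2l_0/\ep\}$ with $l_0>l$) so that the window $[x-R,x+R]$ actually lies to the right of $l$ for all $x\ge k_0$.
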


\begin{proof} By the condition {\bf (J)}, for any given $\ep>0$, there is a $l_0>l$ such that
 \[\int_{-l_0}^{l_0}J(y)\dy>1-\frac{\ep}{2}.\]
 Choose $k_0>2\max\{l_0,\frac{2l_0}{\ep}\}$. Let $k_1>k_0$ and $k_2-k_1>2k_0$. The following proof is divided into the four cases:
 \bess
&& {\bf Case ~  1:} ~ x\in[k_0,k_2-k_1-l_0]; ~ ~ ~ ~ ~ ~  ~ ~ {\bf Case ~  2:} ~ x\in[k_2-k_1-l_0,k_2-k_1+l_0];\\
 && {\bf Case ~  3:} ~ x\in[k_2-k_1+l_0,k_2-l_0]; ~ ~ {\bf Case  ~ 4:} ~ x\in[k_2-l_0,k_2].\eess

For Case 1, straightforward computations yield
 \bess
 \dd\int_{l}^{k_2}J(x-y)\psi(y)\dy&\ge&\int_{l}^{k_2-k_1}J(x-y)\dy
 =\int_{l-x}^{k_2-k_1-x}J(y)\dy\\
 &\ge&\int_{l-k_0}^{l_0}J(y)\dy\ge\int_{-l_0}^{l_0}J(y)\dy\\
 &\ge&1-\frac{\ep}{2}\ge(1-\ep)\psi(x).
 \eess

For Case 2, we have
 \bess
 \int_{l}^{k_2}J(x-y)\psi(y)\dy&=&\int_{l-x}^{k_2-x}J(y)\psi(x+y)\dy
 \ge\int_{l-k_2+k_1+l_0}^{k_1-l_0}J(y)\psi(x+y)\dy\\
 &\ge&\int_{-l_0}^{l_0}J(y)\psi(x+y)\dy
 \ge\int_{-l_0}^{l_0}J(y)\psi(k_2-k_1+l_0+y)\dy\\
 &=&\int_{-l_0}^{l_0}J(y)\dy-\int_{-l_0}^{l_0}J(y)\frac{l_0+y}{k_1}\dy\\
 &\ge&1-\frac{\ep}{2}-\frac{2l_0}{k_1}\ge 1-\ep\ge(1-\ep)\psi(x).
 \eess

For Case 3, simple calculations show
 \bess
 \dd\int_{l}^{k_2}J(x-y)\psi(y)\dy&=&\int_{l-x}^{k_2-x}J(y)\psi(x+y)\dy
 \ge\int_{l-k_2+k_1-l_0}^{l_0}J(y)\psi(x+y)\dy\\
 &\ge&\int_{-l_0}^{l_0}J(y)\psi(x+y)\dy
 =\int_{-l_0}^{l_0}J(y)\frac{k_2-x}{k_1}\dy-\int_{-l_0}^{l_0}J(y)\frac{y}{k_1}\dy\\
 &=&\int_{-l_0}^{l_0}J(y)\frac{k_2-x}{k_1}\dy
 =\psi(x)\int_{-l_0}^{l_0}J(y)\dy\\
 &\ge&(1-\ep)\psi(x).
 \eess

For Case 4, we see
 \bess
 \dd\int_{l}^{k_2}J(x-y)\psi(y)\dy&=&\int_{l-x}^{k_2-x}J(y)\psi(x+y)\dy
 \ge\int_{-l_0}^{k_2-x}J(y)\psi(x+y)\dy\\
 &=&\int_{-l_0}^{l_0}J(y)\psi(x+y)\dy-\int_{k_2-x}^{l_0}J(y)\psi(x+y)\dy\\
 &\ge&\int_{-l_0}^{l_0}J(y)\psi(x+y)\dy
 =\psi(x)\int_{-l_0}^{l_0}J(y)\dy\\
 &\ge&(1-\ep)\psi(x),
 \eess
where we have used $\psi(x+y)\le0$ when $y\in[k_2-x,l_0]$. Hence the proof is finished.
 \end{proof}

Now we are going to construct two suitable lower solutions to derive the results as wanted. This process is divided into two cases, case 1: $\gamma\in(1,2)$ and case 2: $\gamma=2$. Let us begin with handling case 1: $\gamma\in(1,2)$.

 \begin{lemma}{\label{l3.3}}Let $(u,h)$ be the unique solution of \eqref{1.1}. Suppose that ${\bf(J^\gamma)}$ holds with $\gamma\in(1,2)$. Then the following statements are valid.
 \begin{enumerate}[$(1)$]
 \item There exists a positive constant $C$ such that $h(t)\ge C t^{\frac{1}{\gamma-1}}$ for all large $t$.
 \item For any $s(t)=o(t^{\frac{1}{\gamma-1}})$, we have
 \[\liminf_{t\to\yy}u(t,x)\ge U(x) ~ ~ {\rm uniformly ~ in ~ }[0,s(t)].\]
 \end{enumerate}
 \end{lemma}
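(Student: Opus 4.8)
\textbf{Proof proposal for Lemma \ref{l3.3}.}

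The plan is to build a compactly-supported, piecewise-linear lower solution $(\underline u,\underline h)$ of the moving-boundary problem \eqref{1.1} whose free boundary grows like $t^{1/(\gamma-1)}$, using Lemma \ref{l3.2} to handle the nonlocal diffusion term on the sloped part of the profile. Fix a small $\ep>0$ (to be shrunk as needed so that, as in the previous section, $d(\|J_n\|_1-1)u+f(u)$ still satisfies {\bf (F)} and $U_n\ge U-\ep$ via Lemma \ref{l2.3} — though here I expect to work directly with $J$ and $U$ rather than the truncations, since for accelerated spreading the heavy tail of $J$ is exactly what we want to keep). Because spreading happens, $u(t,x)\to U(x)$ in $C_{\rm loc}([0,\infty))$ and $U(x)\to u^*$, so I can pick $X_\ep\gg1$ and $T>0$ with $u(t,x)\ge (1-\ep)u^*$ for $t\ge T$, $x$ in a fixed neighborhood of $X_\ep$. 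Then set
\[
\underline h(t)=k_2(t):=X_\ep+4k_0+ A\,(t-T+\tau)^{\frac{1}{\gamma-1}},\qquad
\underline u(t,x)=(1-\ep)u^*\,\psi_t(x),
\]
where $\psi_t(x)=\min\{1,(k_2(t)-x)/k_1(t)\}$ for $x\ge X_\ep$ and $\underline u=0$ for $x<X_\ep$, with $k_1(t)$ a slowly growing ``width'' of the sloped region (a power of $t$ with exponent less than $1/(\gamma-1)$, chosen so that $k_1(t)>k_0$ and $k_2(t)-k_1(t)>2k_0$ eventually, and $k_1(t)=o(k_2(t))$), and $A,\tau>0$ free parameters. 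The constant $C$ in part (1) is then essentially $A$.

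The verification has three ingredients. First, the interior differential inequality: on the flat piece $\psi_t\equiv1$ one needs $\underline u_t\le d(\int J\,\underline u-\underline u)+f(\underline u)$, which holds because $\underline u$ is there just below the constant $(1-\ep)u^*<u^*$ and the reaction dominates; on the sloped piece Lemma \ref{l3.2} gives $\int_{X_\ep}^{k_2(t)}J(x-y)\psi_t(y)\,dy\ge(1-\ep)\psi_t(x)$, so the nonlocal term is $\ge (1-2\ep)\underline u$ up to a controllable error, and the spatial $x$-derivative cost $\underline u_t$ is $O((1-\ep)u^* \,k_2'(t)/k_1(t)) = O(t^{\frac{1}{\gamma-1}-1}/k_1(t))$, which is forced to be small by choosing $k_1(t)$ to grow, say, like $t^{\frac{1}{\gamma-1}-1}\cdot(\text{slowly})$ or any admissible power — this is where the KPP sign $f(s)\ge f'(0)s/2$ near $0$ (and $f>0$ on $(0,u^*)$) absorbs the remaining $O(\ep)\underline u$ term. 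Second, the free-boundary inequality: I must check
\[
\underline h'(t)=\tfrac{A}{\gamma-1}(t-T+\tau)^{\frac{1}{\gamma-1}-1}
\ \le\ \mu\int_{X_\ep}^{\underline h(t)}\!\!\int_{\underline h(t)}^{\infty}J(x-y)\,\underline u(t,x)\,dy\,dx .
\]
Here the heavy tail enters decisively: since $\underline u\ge (1-\ep)u^*(1-\delta)$ on a region of length $\asymp k_2(t)$ just left of $\underline h(t)$ (taking the part of the slope where $\psi_t$ is bounded below), the double integral is $\gtrsim k_2(t)\int_{k_1(t)}^{\infty}J(y)y\,dy$ roughly; with ${\bf(J^\gamma)}$ and $\gamma\in(1,2)$ one has $\int_0^{h}\!\int_h^\infty J\approx h^{2-\gamma}$ (exactly the computation in Lemma \ref{l3.1}), so the right-hand side is $\gtrsim \underline h(t)^{\,2-\gamma}$, i.e. $\gtrsim t^{\frac{2-\gamma}{\gamma-1}} = t^{\frac{1}{\gamma-1}-1}$ — matching the order of $\underline h'(t)$, so the inequality holds if $A$ is chosen small enough and $\tau$ large enough. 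Third, the initial/ordering conditions $\underline h(T)<h(T)$, $\underline u(T,x)\le u(T,x)$ and $\underline u(t,\underline h(t))\le0$: these are arranged by taking $\tau$ so that $\underline h(T)=X_\ep+4k_0<h(T)$ (possible since $h(T)\to\infty$; if necessary enlarge $T$), by the choice of $X_\ep,T$ on the flat overlap, and because $\psi_t(k_2(t))=0$. Lemma \ref{l2.1}, applied with $(s(t),r(t))=(X_\ep, X_\ep+4k_0)$, then yields $h(t)\ge\underline h(t)\ge Ct^{\frac{1}{\gamma-1}}$ for large $t$, which is part (1); combined with Lemma \ref{l3.1} this also pins $h(t)\approx t^{1/(\gamma-1)}$.

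For part (2), the point is that the lower solution is not only large at the free boundary but close to $u^*$ on a long interior interval: given $s(t)=o(t^{1/(\gamma-1)})$, for large $t$ we have $s(t)\le \underline h(t)-k_1(t)$, so on $[X_\ep,s(t)]$ the profile $\underline u(t,x)=(1-\ep)u^*$, hence $u(t,x)\ge\underline u(t,x)=(1-\ep)u^*\ge U(x)-C\ep$ there; on the remaining fixed interval $[0,X_\ep]$ one uses $u(t,x)\to U(x)$ uniformly. Letting $\ep\to0$ (reselecting $X_\ep,T$ and the parameters each time, exactly as in the proof of Lemma \ref{l2.5}) gives $\liminf_{t\to\infty}u(t,x)\ge U(x)$ uniformly on $[0,s(t)]$. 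I expect the main obstacle to be the simultaneous bookkeeping in the interior inequality on the sloped region: one must choose the width $k_1(t)$ large enough that both the $x$-derivative cost $k_2'(t)/k_1(t)$ and the Lemma \ref{l3.2} threshold/error are negligible, yet small enough ($k_1(t)=o(k_2(t))$) that the free-boundary integral still sees the full length $\asymp k_2(t)$ of the near-$u^*$ plateau; verifying that an admissible window of exponents exists for every $\gamma\in(1,2)$ — and tracking how all the $\ep$'s and the additive constant $4k_0$ (which depends on $\ep$ through Lemma \ref{l3.2}) propagate — is the delicate part, though it is a finite computation once the ansatz is fixed.
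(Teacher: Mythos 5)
Your overall architecture is the paper's: a piecewise-linear lower solution $\underline u=l\,\min\{1,(\underline h(t)-x)/k_1(t)\}$ with front $\underline h(t)\asymp t^{1/(\gamma-1)}$, Lemma \ref{l3.2} on the sloped part, and Lemma \ref{l2.1} with $(s(t),r(t))=(X_\varepsilon,\cdot)$. But there is a genuine gap in the interior differential inequality near the front, and it is exactly the point you flag as "delicate" at the end. On the sloped region the two positive terms you invoke --- the Lemma \ref{l3.2} bound $d\int J\underline u-d\underline u\ge -d\varepsilon\,\underline u$ and the reaction $f(\underline u)\le f'(0)\underline u$ --- are both proportional to $\underline u(t,x)$, which tends to $0$ as $x\to\underline h(t)^-$, whereas $\underline u_t\approx(1-\varepsilon)u^*\,\underline h'(t)/k_1(t)$ is bounded below by a positive quantity independent of $x$ at each fixed $t$. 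So $\underline u_t\le d\int J\underline u-d\underline u+f(\underline u)$ fails for $x$ close enough to $\underline h(t)$ no matter how $k_1$ is chosen, unless one adds the missing ingredient: the paper's estimate \eqref{3.3}, namely that the algebraic tail of $J$ reaching back into the plateau gives the $x$-uniform lower bound $\int_{X_\varepsilon}^{\underline h}J(x-y)\underline u(t,y)\,{\rm d}y\ge C_2\,l\,\underline h^{1-\gamma}(t)$ on the whole sloped region. This is where ${\bf(J^\gamma)}$ enters a second time (not only in the free-boundary flux), and it is what dominates $\underline u_t$ once $l_1$ (your $A$) is small.

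Once this term is in place, the balance $\underline u_t\approx \underline h'/k_1\lesssim k_1^{1-\gamma}$ together with $\underline h'\approx \underline h^{\,2-\gamma}$ forces $k_1(t)\gtrsim \underline h(t)$: your requirement $k_1(t)=o(k_2(t))$ admits no admissible window of exponents, for any $\gamma\in(1,2)$. The paper simply takes $k_1(t)=\underline h(t)/2$. This costs nothing elsewhere: the free-boundary inequality still yields $\gtrsim\underline h^{\,2-\gamma}$ from the sloped half alone (via $\underline h^{-1}\int_{\underline h/4}^{\underline h/2}J(y)y^2\,{\rm d}y$), and part $(2)$ does not need the plateau to reach within $o(\underline h)$ of the front --- it only needs to cover $[0,s(t)]$, and $s(t)=o(t^{1/(\gamma-1)})$ is eventually below $\underline h(t)/2$. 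A smaller bookkeeping issue: with amplitude $(1-\varepsilon)u^*$ and Lemma \ref{l3.2} error $\varepsilon$, absorbing $d\varepsilon\,\underline u$ by $f(\underline u)\ge\rho(u^*-\underline u)$ on the plateau requires roughly $\rho\ge d$, which need not hold; you need two scales, as in the paper's choice $l=u^*-\sqrt{\varepsilon}$ against a Lemma \ref{l3.2} error of order $\varepsilon^2$.
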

 \begin{proof}For any $\ep$ satisfying $0<\ep\ll1$, due to Lemma \ref{l2.2}, we can find a $X_{\ep}>0$ such that $U(x)\ge u^*-\sqrt{\ep}/2$ for $x\ge X_{\ep}$. Define
 \[\underline{h}(t)=(l_1t+\theta)^{\frac{1}{\gamma-1}} ~ ~ {\rm and ~ ~ }\underline{u}(t,x)=l_{\ep}\min\kk\{1,\;2\frac{\underline{h}(t)-x}
 {\underline{h}(t)}\rr\} ~ ~ {\rm for ~ }(t,x)\in[0,\yy)\times[X_{\ep},\underline{h}(t)],\]
where $l_{\ep}=u^*-\sqrt{\ep}$ and positive constants $l_1$ and $\theta$ are determined later.

 Taking advantage of Lemma \ref{l3.2} with $l=X_{\ep}$, we have that if $\theta^{\frac{1}{\gamma-1}}>4k_0$,
 \bes\label{3.1}
 \int_{X_{\ep}}^{\underline{h}(t)}J(x-y)\underline{u}(t,y)\dy\ge(1-\ep^2)\underline{u}(t,x) ~ ~ {\rm for ~ }t>0, ~ x\in[k_{0},\underline{h}(t)].
 \ees

 Next we prove that by choosing $\theta$ and $T$ to be large enough, as well as $l_1$ to be sufficiently small, there holds:
 \bes\label{3.2}
\left\{\begin{aligned}
&\underline u_t\le d\displaystyle\int_{X_{\ep}}^{\underline h(t)}J(x-y)\underline u(t,y){\rm d}y-d\underline u+ f(\underline u), && t>0,~x\in[k_0,\underline h(t))\setminus\left\{\frac{\underline{h}(t)}2\right\},\\
&\underline u(t,\underline h(t))=0,&& t>0,\\
&\underline h'(t)<\mu\displaystyle\int_{X_{\ep}}^{\underline h(t)}\int_{\underline h(t)}^{\infty}
J(x-y)\underline u(t,x){\rm d}y{\rm d}x,&& t>0,\\
&\underline{u}(t,x)\le u(t+T,x), && t>0, ~ x\in[X_{\ep},k_0],\\
&\underline h(0)<h(T),\;\;\underline u(0,x)\le u(T,x),&& x\in[X_{\ep},\underline h(0)].
\end{aligned}\right.
\ees
Once we have proved \eqref{3.2}, making use of Lemma \ref{l2.1} with $(s(t),r(t))=(X_{\ep},k_0)$, we derive
\[h(t+T)\ge \underline{h}(t) ~ ~ {\rm and ~ ~ }u(t+T,x)\ge \underline u(t,x) ~ ~ {\rm for ~ }t\ge0, ~ x\in[X_{\ep},\underline{h}(t)],\]
which obviously yields the assertion (1). As for the assertion (2), we have
\bess\max_{x\in[X_{\ep},s(t)]}|\underline{u}(t,x)-U(x)|\le (u^*-\sqrt{\ep})\kk(1-\min\kk\{1,\;
2\frac{\underline{h}(t)-s(t)}{\underline{h}(t)}\rr\}\rr)
+\frac{3\sqrt{\ep}}{2}\to\frac{3\sqrt{\ep}}{2}
\eess
as $t\to\yy$. Thus there exists a $T_1>T$ such that when $t\ge T_1$ and $x\in[X_{\ep},s(t)]$, $\underline{u}(t,x)\ge U(x)-2\sqrt{\ep}$. Recall that $u(t+T,x)\ge \underline u(t,x)$ for $t\ge0$ and $x\in[X_{\ep},\underline{h}(t)]$. We derive that $u(t,x)\ge U(x)-2\sqrt{\ep}$ for $t\ge T_1+T$ and $x\in[X_{\ep},s(t)]$. On the other hand, since $u(t,x)\to U(x)$ in $C_{\rm loc}([0,\yy))$ as $t\to\yy$, one can choose a $T_2>T_1+T$ such that $u(t,x)\ge U(x)-2\sqrt{\ep}$ for $t\ge T_2$ and $x\in[0,X_{\ep}]$. Hence for any small $\ep>0$, we deduce that $u(t,x)\ge U(x)-2\sqrt{\ep}$ for $t\ge T_2$ and $x\in[0,s(t)]$, which immediately indicates the assertion (2).

Now let us verify \eqref{3.2}. By the definition of $(\underline{u},\underline{h})$, the equality in the second line of \eqref{3.2} holds. Straightforward computations show that if $\theta$ is suitably large, then
\bess
\mu\displaystyle\int_{X_{\ep}}^{\underline h(t)}\!\int_{\underline h(t)}^{\infty}\!J(x-y)\underline u(t,x){\rm d}y{\rm d}x
&\ge&2\mu  l_{\ep}\displaystyle\int_{\frac{\underline{h}(t)}2}^{\underline h(t)}\!\int_{\underline h(t)}^{\infty}\!
J(x-y)\frac{\underline{h}(t)-x}{\underline{h}(t)}{\rm d}y{\rm d}x\\
&=&\frac{2\mu l_{\ep}}{\underline{h}(t)}
\dd\int_{-\frac{\underline{h}(t)}2}^{0}\int_0^{\infty}\!J(x-y)(-x){\rm d}y{\rm d}x\\
&=&\frac{2\mu  l_{\ep}}{\underline{h}(t)}\left(\int_0^{\frac{\underline{h}(t)}2}
\int_0^{y}\!\!J(y)x{\rm d}x{\rm d}y+\int_{\frac{\underline{h}(t)}2}^{\infty}
\int_0^{\frac{\underline{h}(t)}2}\!J(y)x{\rm d}x{\rm d}y\right)\\
 &\ge&\frac{\mu l_{\ep}}{\underline{h}(t)}\int_{\frac{\underline{h}(t)}
 {4}}^{\frac{\underline{h}(t)}2}J(y)y^2{\rm d}y\\
 &\ge&\frac{\varsigma_1\mu l_{\ep}}{\underline{h}(t)}\int_{\frac{\underline{h}(t)}{4}}^{\frac{\underline{h}(t)}2}
y^{2-\gamma}{\rm d}y\\
&\ge& C_1 l_{\ep}\mu \underline{h}^{2-\gamma}(t),
 \eess
where $C_1$ depends only on $J$. Besides, if $C_1 l_{\ep}\mu>\frac{ l_1}{\gamma-1}$, then
\[\underline{h}'(t)=\frac{ l_1}{\gamma-1}( l_1t+\theta)^{\frac{2-\gamma}{\gamma-1}}=\frac{ l_1}{\gamma-1}\underline{h}^{2-\gamma}(t)< C_1 l_{\ep}\mu \underline{h}^{2-\gamma}(t).\]
Thus the inequality in the third line of \eqref{3.2} is valid if $\theta$ is large enough and $l_1$ is sufficiently small.

Now we prove the inequality in the first line of \eqref{3.2}. For $x\in[{\underline{h}(t)}/4,\underline{h}(t)]$, if $\theta$ is large enough, we see
  \bes\label{3.3}
\int_{X_{\ep}}^{\underline h(t)}J(x-y)\underline u(t,y){\rm d}y&=&\int_{X_{\ep}-x}^{\underline h(t)-x}J(y)\underline u(t,x+y){\rm d}y\nonumber\\
  &\ge&\int_{X_{\ep}-\frac{\underline{h}(t)}{4}}^{-\frac{\underline{h}(t)}{8}}J(y)\underline u(t,x+y){\rm d}y\nonumber\\
  &\ge& l_{\ep}\int_{-\frac{\underline{h}(t)}{6}}^{-\frac{\underline{h}(t)}{8}}
 \frac{\varsigma_1}{|y|^{\gamma}}\frac{\underline{h}(t)-x-y}{\underline{h}(t)}{\rm d}y\nonumber\\
&\ge&\frac{l_{\ep}}{\underline{h}(t)}\int_{-\frac{\underline{h}(t)}{6}}^{-\frac{\underline{h}(t)}{8}}
\frac{\varsigma_1}{|y|^{\gamma}}(-y){\rm d}y\nonumber\\
&\ge& C_2 l_{\ep}\underline{h}^{1-\gamma}(t)
\ees
with $C_2$ relying only on $J$. According to {\bf (F)}, it is easy to see that there exists a $\rho>0$, depending only on $f$, such that
\bes\label{3.4}
f(x)\ge \rho\min\{x,\,u^*-x\} ~ ~ {\rm for ~ }x\in[0,u^*].
\ees
Thus we have
\bes\label{3.5}
\left\{\begin{aligned}
&f(\underline u)\ge \rho\min\{\underline u,\,u^*-\underline u\}\ge \rho\min\{ l_{\ep},\,u^*-\underline u\}\ge \rho\sqrt{\ep}\; ~ ~ {\rm for ~ }x\in\kk[k_0,\,\frac{\underline{h}(t)}{4}\rr],\\
&f(\underline u)\ge \rho\min\{\underline u,\,u^*-\underline u\}\ge \rho\ep\underline{u} \;~ ~ {\rm if } ~ \ep ~ {\rm is} ~ {\rm small} ~ {\rm enough}.
\end{aligned}\right.
 \ees
Using \eqref{3.1}, \eqref{3.3} and \eqref{3.5} yields that for $x\in[k_0,\frac{\underline{h}(t)}{4}]$,
\bess
d\displaystyle\int_{X_{\ep}}^{\underline h(t)}J(x-y)\underline u(t,y){\rm d}y-d\underline u+ f(\underline u)\ge-d\ep^2\underline{u}+\rho\sqrt{\ep}\ge-d\ep^2 u^*+\rho\sqrt{\ep}\ge0,
\eess
and for $x\in [{\underline{h}(t)}/{4},\underline{h}(t)]$,
\bess
 &&d\displaystyle\int_{X_{\ep}}^{\underline h(t)}J(x-y)\underline u(t,y){\rm d}y-d\underline u+ f(\underline u)\nonumber\\
&\ge&\left(\min\left\{\frac{\rho\ep}2,d\right\}+\left(d-\frac{\rho\ep}2\right)^+\right)
\int_{X_{\ep}}^{\underline h(t)}J(x-y)\underline u(t,y){\rm d}y-\left(d-\rho\ep\right)\underline u\nonumber\\
&\ge&\min\left\{\frac{\rho\ep}2,\,d\right\}C_2 l_{\ep}\underline{h}^{1-\gamma}(t)+\left(d-\frac{\rho\ep}2\right)^+(1-\ep^2)\underline{u}
-\left(d-\rho\ep\right)\underline u\nonumber\\[1mm]
&\ge&\min\left\{\frac{\rho\ep}2,\,d\right\}C_2  l_{\ep}\underline{h}^{1-\gamma}(t)
\eess
provided that $\ep$ is suitably small. On the other hand, we have $\underline{u}_t(t,x)=0$ for $t>0$ and $x\in[0,{\underline{h}(t)}/2]$, and
\[\underline{u}_t(t,x)=2 l_{\ep}\frac{x\underline{h}'(t)}{\underline{h}^2(t)}\le 2 l_{\ep}\frac{\underline{h}'(t)}{\underline{h}(t)}=\frac{2 l_1 l_{\ep}}{\gamma-1}\underline{h}^{1-\gamma}(t) ~ ~ {\rm for} ~ t>0, ~ x\in\kk(\frac{\underline{h}(t)}2,\underline{h}(t)\rr).\]
So the inequality in the first line of \eqref{3.2} holds if $\frac{2 l_1}{\gamma-1}\le\min\left\{\frac{\rho\ep}2,d\right\}C_2$.

Now to prove \eqref{3.2}, it remains to show the inequalities in the last two lines of \eqref{3.2}. For positive constants $\ep,\;\theta$ and $l_1$ chosen as above, since spreading happens for \eqref{1.1}, we can find a $T>0$ such that $h(T)>\theta^{\frac{1}{\gamma-1}}=\underline{h}(0)$ and $u(t,x)\ge U(x)-\sqrt{\ep}/2$ for $t\ge T$ and $x\in[X_{\ep}, \underline{h}(0)]$. Recalling $U(x)\ge u^*-\sqrt{\ep}/2$ for $x\ge X_{\ep}$, we have that when $t\ge T$ and $x\in[X_{\ep}, \underline{h}(0)]$,
\[u(t,x)\ge U(x)-\sqrt{\ep}/2\ge u^*-\sqrt{\ep}\ge \underline{u}(t,x),\]
which immediately implies that $u(t+T,x)\ge\underline{u}(t,x)$ for $t\ge0$ and $x\in[X_{\ep},\underline{h}(0)]$. Thus \eqref{3.2} holds. The proof is complete.
 \end{proof}

Now we deal with case 2: $\gamma=2$.

  \begin{lemma}{\label{l3.4}}Let $(u,h)$ be the unique solution of \eqref{1.1}. Suppose that ${\bf(J^\gamma)}$ holds with $\gamma=2$. Then the following statements are valid.
 \begin{enumerate}[$(1)$]
 \item There exists a positive constant $C$ such that $h(t)\ge C t\ln t$ for all large $t$.
 \item For any $s(t)=o(t\ln t)$, we have
 \[\liminf_{t\to\yy}u(t,x)\ge U(x) ~ ~ {\rm uniformly ~ in ~ }[0,s(t)].\]
 \end{enumerate}
 \end{lemma}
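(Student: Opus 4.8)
The proof runs parallel to that of Lemma~\ref{l3.3}; the only essential change is the shape of the lower solution, which must now grow like $t\ln t$ instead of $t^{1/(\gamma-1)}$. Fix $0<\ep\ll1$, pick $X_\ep>0$ with $U(x)\ge u^*-\sqrt\ep/2$ for $x\ge X_\ep$, and set $l_\ep=u^*-\sqrt\ep$. With $l_1$ small and $\theta$ large (chosen at the very end), define
\[
\underline h(t)=l_1(t+\theta)\ln(t+\theta),\qquad
\underline u(t,x)=l_\ep\min\kk\{1,\,\frac{\underline h(t)-x}{w(t)}\rr\}\quad\text{on }[0,\yy)\times[X_\ep,\underline h(t)],
\]
where $w(t)=\underline h(t)^{1/2}$ (any fixed power $\underline h(t)^{\alpha}$, $\alpha\in(0,1)$, will do). Lemma~\ref{l3.2} applied with $l=X_\ep$, $k_1=w(t)$, $k_2=\underline h(t)$ (and $\ep^2$ in place of $\ep$) provides, once $\underline h(0)$ is large, a $k_0>X_\ep$ such that $\int_{X_\ep}^{\underline h(t)}J(x-y)\underline u(t,y)\dy\ge(1-\ep^2)\underline u(t,x)$ for $x\in[k_0,\underline h(t)]$. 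One then checks the analogue of \eqref{3.2}: the differential inequality for $\underline u$ on $[k_0,\underline h(t))\setminus\{\underline h(t)-w(t)\}$, $\underline u(t,\underline h(t))=0$, the free-boundary inequality, $\underline u(t,x)\le u(t+T,x)$ on $[X_\ep,k_0]$, and $\underline h(0)<h(T)$, $\underline u(0,x)\le u(T,x)$ on $[X_\ep,\underline h(0)]$ — the ordering relations being arranged verbatim as in Lemma~\ref{l3.3} from the fact that spreading forces $u(t,\cdot)\to U$ locally uniformly with $U\ge u^*-\sqrt\ep/2$ beyond $X_\ep$. Lemma~\ref{l2.1} then yields $h(t+T)\ge\underline h(t)$ and $u(t+T,x)\ge\underline u(t,x)$, whence (1) follows (as $\underline h(t)\ge l_1 t\ln t$). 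For (2): if $s(t)=o(t\ln t)$ then $s(t)<\underline h(t)-w(t)$ for large $t$, so $\underline u(t,x)=l_\ep\ge U(x)-\sqrt\ep$ on $[X_\ep,s(t)]$, and one concludes exactly as in Lemma~\ref{l3.3}.

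The logarithmic rate is produced by the free-boundary inequality. Using ${\bf(J^\gamma)}$ with $\gamma=2$ and keeping only the flat part $x\in[X_\ep,\underline h(t)-w(t)]$ (on which $\underline u=l_\ep$),
\[
\mu\int_{X_\ep}^{\underline h(t)}\!\!\int_{\underline h(t)}^{\yy}J(x-y)\underline u(t,x)\dy\dx
\ \ge\ \mu l_\ep\int_{w(t)}^{\underline h(t)-X_\ep}\!\!\int_s^{\yy}J(z)\,{\rm d}z\,{\rm d}s
\ \ge\ C_1\mu l_\ep\ln\underline h(t),
\]
with $C_1=C_1(J)$, since $\int_s^\yy J\ge\varsigma_1 s^{-1}$ for $s\gg1$ and $\ln(\underline h(t)/w(t))=\tfrac12\ln\underline h(t)$. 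As $\underline h'(t)=l_1(\ln(t+\theta)+1)\le 2l_1\ln\underline h(t)$ for $\theta$ large, the requirement $\underline h'(t)<C_1\mu l_\ep\ln\underline h(t)$ is met once $l_1$ is chosen with $2l_1<C_1\mu l_\ep$.

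The main obstacle is the differential inequality $\underline u_t\le d\int_{X_\ep}^{\underline h}J(x-y)\underline u(t,y)\dy-d\underline u+f(\underline u)$ inside the transition layer $(\underline h(t)-w(t),\underline h(t))$. There $\underline u_t\le l_\ep\underline h'(t)/w(t)$ is of order $\ln\underline h(t)/w(t)$ — much larger than the $O(\underline h^{1-\gamma})$ quantity controlled in Lemma~\ref{l3.3} — and it must be absorbed by the nonlocal surplus, which a computation like \eqref{3.3} (again through ${\bf(J^\gamma)}$) bounds below by $\gtrsim l_\ep\ln w(t)/w(t)$ on the layer, supplemented where $\underline u$ stays away from $0$ by $f(\underline u)\ge\rho\sqrt\ep$ from \eqref{3.4} (the various cases being handled by the $\min\{\rho\ep/2,d\}$ and $(d-\rho\ep)^+$ splitting used around \eqref{3.5}). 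With $w(t)=\underline h(t)^{1/2}$ one has $\ln w(t)=\tfrac12\ln\underline h(t)$, so the surplus beats $\underline u_t$ precisely for $l_1$ small. This is the delicate point: the layer width $w(t)$ must grow — otherwise $\underline u_t$ blows up and cannot be absorbed — yet it must stay a small power of $\underline h(t)$ so that $\ln(\underline h(t)/w(t))\asymp\ln\underline h(t)$ and the free-boundary speed keeps its $\ln\underline h(t)$ order; once a width reconciling these opposing demands is fixed, the rest is the bookkeeping already carried out for Lemma~\ref{l3.3}.
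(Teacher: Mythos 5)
Your construction coincides with the paper's: the same front $\underline h(t)=l_1(t+\theta)\ln(t+\theta)$, the same piecewise-linear profile with a sublinearly growing transition layer (the paper takes width $(t+\theta)^{\alpha}$, $\alpha\in(0,1)$, rather than $\underline h(t)^{1/2}$, which is immaterial), the same use of Lemma \ref{l3.2} for the bulk, of ${\bf(J^\gamma)}$ to extract the $\ln$ in the free-boundary inequality and the $\ln w(t)/w(t)$ surplus in the layer, and of Lemma \ref{l2.1} to conclude. The argument is correct and is essentially the proof given in the paper.
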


\begin{proof} Let $X_{\ep}$ and $k_0$ be defined as in the proof of Lemma \ref{l3.3} for any small $\ep>0$. Recall that $U(x)\ge u^*-\sqrt{\ep}/2$ for $x\ge X_{\ep}$. Define
\[\underline{h}(t)=l_1(t+\theta)\ln (t+\theta) ~ ~ {\rm and ~ ~ }\underline{u}(t,x)=l_{\ep}\min\kk\{1,\,\frac{\underline{h}(t)-x}
{(t+\theta)^{\alpha}}\rr\} ~ ~ {\rm for ~ }t\ge0, ~ x\in[X_{\ep},\underline{h}(t)],\]
where $l_{\ep}=u^*-\sqrt{\ep}$ and $\alpha\in(0,1)$. Moreover, $l_1$ and $\theta$ satisfy
\bess
&\theta\gg1, ~ \;\theta^{\alpha}>k_0, ~\; l_1\theta^{1-\alpha}>3\alpha, ~\; l_1\theta\ln \theta-3\theta^{\alpha}>2k_0,\\[1mm]
&\dd\frac{ l_{\ep}\mu \varsigma_1}2(1-\alpha)>2 l_1, \;\;~ \min\kk\{\frac{\rho\ep}{2},\,d\rr\}\frac{l_{\ep}\zeta_1\alpha}{4}\ge l_1(2l_{\ep}-\alpha l_1+1),
\eess
where $\zeta_1$ is given by ${\bf (J^{\gamma})}$ and $\rho$ is determined by \eqref{3.4}.
In view of the above choices of $l_1$ and $\theta$ and Lemma \ref{l3.2}, we have
\bes\label{3.6}
\int_{X_{\ep}}^{\underline{h}(t)}J(x-y)\underline{u}(t,y)\dy\ge(1-\ep^2)\underline{u}(t,x) ~ ~ {\rm for ~ }t\ge0,~ x\in[k_0,\underline{h}(t)].
\ees
Now we are in the position to show that by further enlarging $\theta$ suitably and choosing $T$ large enough, there holds:
\bes\label{3.7}
\left\{\begin{aligned}
&\underline u_t\le d\displaystyle\int_{X_{\ep}}^{\underline h(t)}J(x-y)\underline u(t,y){\rm d}y-d\underline u+ f(\underline u), && t>0,~x\in[k_0,\underline h(t))\setminus\left\{\underline{h}(t)-(t+\theta)^{\alpha}\right\},\\
&\underline u(t,\underline h(t))=0,&& t>0,\\
&\underline h'(t)<\mu\displaystyle\int_{X_{\ep}}^{\underline h(t)}\int_{\underline h(t)}^{\infty}
J(x-y)\underline u(t,x){\rm d}y{\rm d}x,&& t>0,\\
&\underline{u}(t,x)\le u(t+T,x), && t>0, ~ x\in[X_{\ep},k_0],\\
&\underline h(0)<h(T),\;\;\underline u(0,x)\le u(T,x),&& x\in[X_{\ep},\underline h(0)].
\end{aligned}\right.
\ees
If we have done it, by the definitions of $(\underline{u},\underline{h})$ and the similar arguments as in the proof of Lemma \ref{l3.3}, we readily derive the results as wanted.

Now let us verify \eqref{3.7}. The second identity of \eqref{3.7} is obvious. Direct calculations show
\bess
\mu\displaystyle\int_{X_{\ep}}^{\underline h(t)}\!\!\int_{\underline h(t)}^{\infty}\!
J(x-y)\underline u(t,x){\rm d}y{\rm d}x
&\ge&\mu l_{\ep}\displaystyle\int_{\frac{\underline{h}(t)}2}^{\underline{h}(t)-(t+\theta)^{\alpha}}\!\int_{\underline h(t)}^{\infty}\!J(x-y){\rm d}y{\rm d}x\\
&=& l_{\ep}\mu\left(\int_{(t+\theta)^{\alpha}}^{\frac{\underline{h}(t)}2}
\int_{(t+\theta)^{\alpha}}^{y}
J(y){\rm d}x{\rm d}y+\int_{\frac{\underline{h}(t)}2}^{\infty}
\int_{(t+\theta)^{\alpha}}^{\frac{\underline{h}(t)}2}
J(y){\rm d}x{\rm d}y\right)\\
 &\ge& l_{\ep}\mu\int_{(t+\theta)^{\alpha}}^{\frac{\underline{h}(t)}2}\int_{(t+\theta)^{\alpha}}^{y}
J(y){\rm d}x{\rm d}y\\
 &\ge& l_{\ep}\mu\int_{2(t+\theta)^{\alpha}}^{\frac{\underline{h}(t)}2}
J(y)\left[y-(t+\theta)^{\alpha}\right]{\rm d}y\\
 &\ge& \frac{ l_{\ep}\mu \varsigma_1}2\int_{2(t+\theta)^{\alpha}}^{\frac{\underline{h}(t)}2}
y^{-1}{\rm d}y\\
&=&\frac{ l_{\ep}\mu \varsigma_1}2[\ln l_1+(1-\alpha)\ln(t+\theta)+\ln\ln(t+\theta)-\ln2]\\
&\ge&\frac{ l_{\ep}\mu \varsigma_1}2(1-\alpha)\ln(t+\theta)
\eess
provided that $\theta$ is large enough such that
\[J(y)\ge\zeta_1y^{-2}\;\; {\rm ~ for ~ }y\in\kk[\frac{l_1\theta\ln \theta}{2},\;2\theta^{\alpha}\rr],~ ~ {\rm and ~ ~ }\ln\ln\theta>\ln_2-\ln l_1.\]
 Moreover, it is easy to see that
\[\underline{h}'(t)= l_1\ln(t+\theta)+ l_1\le 2 l_1\ln(t+\theta)<\frac{ l_{\ep}\mu \varsigma_1}2(1-\alpha)\ln(t+\theta).\]
Hence the inequality in the third line of \eqref{3.7} holds.

Now we prove the first inequality of \eqref{3.7}. Clearly,
 \bess
 \underline{u}(t,x)\ge l_{\ep}\frac{\underline{h}-x}{2(t+\theta)^{\alpha}}\;\;\;{\rm for} \;\;x\in[\underline{h}(t)-2(t+\theta)^{\alpha},\underline{h}(t)].
  \eess
Thus, for $x\in[\underline{h}(t)-(t+\theta)^{\alpha},\underline{h}(t)]$, we have
 \bess
\int_{X_{\ep}}^{\underline h(t)}J(x-y)\underline u(t,y){\rm d}y&=&\int_{X_{\ep}-x}^{\underline h(t)-x}J(y)\underline u(t,x+y){\rm d}y\\
 &\ge&\frac{ l_{\ep}}2\int_{-(t+\theta)^{\alpha}}^{-(t+\theta)^{\alpha/2}}J(y)
 \frac{\underline{h}(t)-x-y}{(t+\theta)^{\alpha}}{\rm d}y\\
&\ge&\frac{ l_{\ep}}2\int_{-(t+\theta)^{\alpha}}^{-(t+\theta)^{\alpha/2}}J(y)
\frac{-y}{(t+\theta)^{\alpha}}{\rm d}y\\
 &\ge&\frac{ l_{\ep}\varsigma_1}2\int_{-(t+\theta)^{\alpha}}^{-(t+\theta)^{\alpha/2}}
 \frac{(-y)^{-1}}{(t+\theta)^{\alpha}}{\rm d}y\\
&\ge&\frac{ l_{\ep}\varsigma_1\alpha\ln(t+\theta)}{4(t+\theta)^{\alpha}},
\eess
where $\theta$ is sufficiently large such that $J(y)\ge \zeta_1 y^{-2}$ for $y\le -\theta^{\frac{\alpha}{2}}$.
Moreover, for $x\in\big[\frac{\underline{h}(t)-(t+\theta)^{\alpha}}2,\, \underline{h}(t)-(t+\theta)^{\alpha}\big]$ with $\theta$ chosen as above, we have
\bess
 \int_{X_{\ep}}^{\underline h(t)}J(x-y)\underline u(t,y){\rm d}y&=&\int_{X_{\ep}-x}^{\underline h(t)-x}J(y)\underline u(t,x+y){\rm d}y\\
  &\ge&  l_{\ep}\int_{-(t+\theta)^{\alpha}}^{-(t+\theta)^{\alpha/2}}J(y){\rm d}y\\
&\ge & l_{\ep}\varsigma_1\int_{-(t+\theta)^{\alpha}}^{-(t+\theta)^{\alpha/2}}(-y)^{-2}{\rm d}y\\
 &\ge& \frac{ l_{\ep}\varsigma_1\alpha\ln(t+\theta)}{2(t+\theta)^{\alpha}}.
\eess
For $x\in\big[k_0,\,\frac{\underline{h}(t)-(t+\theta)^{\alpha}}2\big]$, make use of the first inequality of \eqref{3.5} and \eqref{3.6} it yields
\[d\displaystyle\int_{X_{\ep}}^{\underline h(t)}J(x-y)\underline u(t,y){\rm d}y-d\underline u+ f(\underline u)\ge-\ep^2 du^*+\rho\sqrt{\ep}\ge0 ~ ~ {\rm if } ~ \ep ~ {\rm is} ~ {\rm small} ~ {\rm enough}.\]
For $x\in\big[\frac{\underline{h}(t)-(t+\theta)^{\alpha}}2,\,\underline{h}(t)\big]$, taking advantage of the second inequality of \eqref{3.5} and \eqref{3.6}, and using the analogous arguments as in the proof of Lemma \ref{l3.3}, we can deduce
\bess
 d\displaystyle\int_{X_{\ep}}^{\underline h(t)}J(x-y)\underline u(t,y){\rm d}y-d\underline u+ f(\underline u)\ge\min\left\{\frac{\rho\ep}2,d\right\}\frac{ l_{\ep}\varsigma_1\alpha\ln(t+\theta)}{4(t+\theta)^{\alpha}}\;\; ~ {\rm if } ~ \ep\ll1.
\eess
On the other hand, we have $\underline{u}_t=0$ for $t>0$ and $x\in[0,\underline{h}(t)-(t+\theta)^{\alpha})$, and
 \[\underline{u}_t=\frac{ l_1 l_{\ep}(1-\alpha)\ln(t+\theta)+ l_1 l_{\ep}}{(t+\theta)^{\alpha}}
 +\frac{ l_{\ep}\alpha x}{(t+\theta)^{1+\alpha}}\le\frac{\ln(t+\theta)}{(t+\theta)^{\alpha}}\left[ l_1 l_{\ep}(1-\alpha)
 + l_1 l_{\ep}+ l_1\right]\]
for $t>0$ and $x\in(\underline{h}(t)-(t+\theta)^{\alpha},\, \underline{h}(t)]$. So the first inequality of \eqref{3.7} holds.

For $l_1$ and $\theta$ chosen as above, since spreading happens for \eqref{1.1}, there exists a $T>0$ such that $h(T)>l_1\theta\ln \theta=\underline{h}(0)$ and $u(t,x)\ge U(x)-\sqrt{\ep}/2$ for $t\ge T$ and $x\in[X_{\ep}, \underline{h}(0)]$. Notice that $U(x)\ge u^*-\sqrt{\ep}/2$ for $x\ge X_{\ep}$. We obtain that, for $t\ge T$ and $x\in[X_{\ep}, \underline{h}(0)]$,
\[u(t,x)\ge U(x)-\sqrt{\ep}/2\ge u^*-\sqrt{\ep}\ge \underline{u}(t,x),\]
which immediately gives $u(t+T,x)\ge\underline{u}(t,x)$ for $t\ge0$ and $x\in[X_{\ep},\underline{h}(0)]$. Hence \eqref{3.7} holds. The proof is ended.
\end{proof}

Clearly, Theorem \ref{t1.3} follows from Lemmas \ref{l3.1}, \ref{l3.3}, \ref{l3.4} and the fact that $\limsup_{t\to\yy}u(t,x)\le U(x)$ uniformly in $[0,\yy)$ which is already proved in the step 1 of the proof of Lemma \ref{l2.5}.


\begin{thebibliography}{99}
\bibliographystyle{siam}
\setlength{\baselineskip}{15pt}

\vspace{-1.5mm}\bibitem{DL}Y.H. Du and Z.G. Lin, {\it Spreading-vanishing dichotomy in the diffusive logistic model with a free boundary}, SIAM J. Math. Anal. \textbf{42} (2010), 377-405.

\vspace{-1.5mm}\bibitem{Du22}Y.H. Du, {\it Propagation and reaction-diffusion models with free boundaries}, Bull. Math. Sci. {\bf 12} (2022), Article No. 2230001, 56 pp.

\vspace{-1.5mm}\bibitem{CDLL}J.-F. Cao, Y.H. Du, F. Li and W.-T. Li, {\it The dynamics of a Fisher-KPP nonlocal diffusion model with free boundaries}, J. Funct. Anal. \textbf{277} (2019), 2772-2814.

\vspace{-1.5mm}\bibitem{CQW}C. Cor{\'t}azar, F. Qui{\'r}os and N. Wolanski, {\it A nonlocal diffusion problem with a sharp free boundary}, Interfaces Free Bound. {\bf 21}  (2019), 441-462.

\vspace{-1.5mm}\bibitem{AMRT}F. Andreu, J.M. Maz{\'o}n, J.D. Rossi and J. Toledo, {\it Nonlocal Diffusion Problems}, Math. Surveys Monogr. \textbf{165}, AMS, Providence, RI 2010.

\vspace{-1.5mm}\bibitem{DLZ}Y.H. Du, F. Li and M.L. Zhou, {\it Semi-wave and spreading speed of the nonlocal Fisher-KPP equation with free boundaries}, J. Math. Pures Appl.  \textbf{154} (2021), 30-66.

\vspace{-1.5mm}\bibitem{LLW22}L. Li, W.-T. Li and M.X. Wang, {\it Dynamics for nonlocal diffusion problems with a free boundary}, J. Differential Equations \textbf{330}  (2022), 110-149.

\vspace{-1.5mm}\bibitem{DN1}Y.H. Du and W.J. Ni, {\it Rate of propagation for the Fisher-KPP equation with nonlocal diffusion and free boundaries}, J. Eur. Math. Soc. (2023), Doi:10.4171/JEMS/1392.

\vspace{-1.5mm}\bibitem{DN2}Y.H. Du and W.J. Ni, {\it Exact rate of accelerated propagation in the Fisher-KPP equation with nonlocal diffusion and free boundaries}, Math. Ann. \textbf{389} (2024), 2931-2958.

\vspace{-1.5mm}\bibitem{DN3}Y.H. Du and W.J. Ni, {\it Approximation of random diffusion equation by nonlocal diffusion equation in free boundary problems of one space dimension}, Comm. Contemp. Math. \textbf{25} (2023), Article No. 2250004.

\vspace{-1.5mm}\bibitem{ZLZ}W.Y. Zhang, Z.H. Liu and L. Zhou, {\it Dynamics of a nonlocal diffusive logistic model with free boundaries in time periodic environment}, Discrete Contin. Dyn. Syst., Ser. B \textbf{26} (2021), 3767-3784.

\vspace{-1.5mm}\bibitem{DN4}Y.H. Du and W.J. Ni, {\it Analysis of a West Nile virus model with nonlocal diffusion and free boundaries}, Nonlinearty \textbf{33} (2020), 4407-4448.

\vspace{-1.5mm}\bibitem{DN5}Y.H. Du and W.J. Ni, {\it Semi-wave, traveling wave and spreading speed for monostable cooperative systems with nonlocal diffusion and free boundaries, Part 1: Semi-wave and a threshold condition}, J. Differential Equations \textbf{308} (2022), 369-420.

\vspace{-1.5mm}\bibitem{ZZLD}M. Zhao, Y. Zhang, W-.T. Li and Y.H. Du, {\it The dynamics of a degenerate epidemic model with nonlocal diffusion and free boundaries}, J. Differential Equations \textbf{269} (2020), 3347-3386.

\vspace{-1.5mm}\bibitem{NV} T.-H. Nguyen and H.-H. Vo, {\it Dynamics for a two-phase free boundaries system in an epidemiological model with couple nonlocal dispersals}, J. Differential Equations \textbf{335} (2022), 398-463.

\vspace{-1.5mm}\bibitem{PLL} L.Q. Pu, Z.G. Lin and Y. Lou, {\it A West Nile virus nonlocal model with free boundaries and seasonal succession}, J. Math. Biol. \textbf{86} (2023), Article No. 25.

\vspace{-1.5mm}\bibitem{LLW24} L. Li, X.P. Li and M.X. Wang, {\it The monostable cooperative system with nonlocal diffusion and free boundaries}, Proc. Royal Soc. Edinburgh A \textbf{154} (2024), 629-659.

\vspace{-1.5mm}\bibitem{LW24} L. Li and M.X. Wang, {\it Free boundary problems of a mutualist model with nonlocal diffusion}, J. Dyn. Diff. Equat. \textbf{36} (2024), 375-403.

\vspace{-1.5mm}\bibitem{LSW}L. Li, W.J. Sheng and M.X. Wang, {\it Systems with nonlocal vs. local diffusions and free boundaries}, J. Math. Anal. Appl. \textbf{483}(2) (2020), 123646.

\vspace{-1.5mm}\bibitem{DWZ}Y.H. Du, M.X. Wang and M. Zhao, {\it Two species nonlocal diffusion systems with free boundaries}, Discrete Contin. Dyn. Syst. \textbf{42}(3) (2022), 1127-1162.

\vspace{-1.5mm}\bibitem{DNS1}Y.H. Du, W.J. Ni and L.F. Shi, {\it Long-time dynamics of a competition model with nonlocal diffusion and free boundaries: Vanishing and spreading of the invader}, submitted (2024), arXiv: 2403.19131.

\vspace{-1.5mm}\bibitem{DNS2}Y.H. Du, W.J. Ni and L.F. Shi, {\it Long-time dynamics of a competition model with nonlocal diffusion and free boundaries: Chances of successful invasion}, submitted (2024), arXiv: 2403.19134.

\vspace{-1.5mm}\bibitem{LLW2}L. Li, X.P. Li and M.X. Wang, {\it Sharp estimates of solutions of free boundary problems with nonlocal diffusion}, submitted (2021), 	 arXiv: 2108.09165.

\vspace{-1.5mm}\bibitem{Ya}H. Yagisita, {\it Existence and nonexistence of traveling waves for a nonlocal monostable equation}, Publ. Res. Inst. Math. Sci. \textbf{45} (2009), 925-953.

\vspace{-1.5mm}\bibitem{Bre}H. Br{\'e}zis, {\it Functional Analysis, Sobolev Spaces and Partial Differential Equations}, Springer, New York, 2010.

\end{thebibliography}
\end{document}